\newcommand{\CC}{{\cal C}}
\newtheorem{theorem}{Theorem}[section]
\newtheorem{corollary}[theorem]{Corollary}
\newtheorem{lemma}[theorem]{Lemma}
\newtheorem{observation}[theorem]{Observation}
\begin{document}
\title{Fine structure of $4$-critical triangle-free graphs III.  General surfaces}
\author{%
     Zden\v{e}k Dvo\v{r}\'ak\thanks{Computer Science Institute (CSI) of Charles University,
           Malostransk{\'e} n{\'a}m{\v e}st{\'\i} 25, 118 00 Prague, 
           Czech Republic. E-mail: \protect\href{mailto:rakdver@iuuk.mff.cuni.cz}{\protect\nolinkurl{rakdver@iuuk.mff.cuni.cz}}.
           Supported by project 14-19503S (Graph coloring and structure) of Czech Science Foundation.}
\and	   
Bernard Lidick\'y\thanks{
Iowa State University, Ames IA, USA. E-mail:
\protect\href{mailto:lidicky@iasate.edu}{\protect\nolinkurl{lidicky@iastate.edu}}.
Supported by NSF grants DMS-1266016 and DMS-1600390.}
}
\date{\today}
\maketitle
\begin{abstract}
Dvo\v{r}\'ak, Kr\'al' and Thomas~\cite{trfree4,trfree6} gave a description of the structure of triangle-free graphs
on surfaces with respect to $3$-coloring.  Their description however contains two substructures
(both related to graphs embedded in plane with two precolored cycles) whose coloring properties
are not entirely determined.  In this paper, we fill these gaps.
\end{abstract}

\section{Introduction}

The interest in the $3$-coloring properties of planar graphs was started by a celebrated theorem of Gr\"otzsch~\cite{grotzsch1959},
who proved that every planar triangle-free graph is $3$-colorable.  This result was later generalized and strengthened
in many different ways.  The one relevant to the topic of this paper concerns graphs embedded in surfaces.
While the direct analogue of Gr\"otzsch's theorem is false for any surface other than the sphere, $3$-colorability of
triangle-free graphs embedded in a fixed surface is nowadays quite well understood.
Building upon several previous results (\cite{trfree3,gimbel,locplanq,MohSey,NakNegOta,tw-klein,thom-torus,thomassen-surf}),
Dvo\v{r}\'ak et al.~\cite{trfree4,trfree6} gave a description of the structure of triangle-free graphs
on surfaces with respect to $3$-coloring.  To state the description precisely, we need to introduce a number of definitions.

A \emph{surface} is a compact connected $2$-manifold with (possibly null) boundary.  Each component of the boundary
is homeomorphic to a circle, and we call it a \emph{cuff}.
Consider a graph $G$ embedded in the surface $\Sigma$; when useful, we identify $G$ with the topological
space consisting of the points corresponding to the vertices of $G$ and the simple curves corresponding
to the edges of $G$.  A \emph{face} $f$ of $G$ is a maximal connected subset of $\Sigma-G$.
A face $f$ is a \emph{closed $2$-cell} if it is homeomorphic to an open disk and its boundary forms a cycle $C$ in $G$;
the \emph{length} of $f$ is defined as $|E(C)|$.
A graph $H$ is a \emph{quadrangulation} of a surface $\Sigma$ if all faces of $H$ are closed $2$-cells and have length $4$
(in particular, the boundary of $\Sigma$ is formed by a set of pairwise vertex-disjoint cycles in $H$, called the \emph{boundary cycles} of $H$).
A vertex of $G$ contained in the boundary of $\Sigma$ is called a \emph{boundary vertex}.

Let $G$ be a graph, let $\psi$ be a proper coloring of $G$ by colors $\{1,2,3\}$ and let $Q=v_1v_2\ldots v_kv_1$ be a directed closed walk in $G$.
One can view $\psi$ as mapping $Q$ to a closed walk in a triangle $T$ with vertices $1$, $2$, and $3$, and the \emph{winding number} of $\psi$ on $Q$ is then the number of times this walk goes
around $T$ in a fixed orientation. More precisely, for $uv\in E(G)$, let $\delta_\psi(u,v)=1$ if $\psi(v)-\psi(u)\in\{1,-2\}$, and
$\delta_\psi(u,v)=-1$ otherwise.  For a walk $W=u_1u_2\ldots u_m$, let $\delta_\psi(W)=\sum_{i=1}^{m-1} \delta_\psi(u_i,u_{i+1})$.
The winding number $\omega_\psi(Q)$ of $\psi$ on $Q$ is defined as $\delta_\psi(Q)/3$.

Suppose that $G$ is embedded in an orientable surface $\Sigma$ so that every face of $G$ is closed $2$-cell.  Let $\CC$ be the set consisting of all facial
and boundary cycles of $G$.  For each of the cycles in $\CC$, choose an orientation of its edges with outdegree one
so that every edge $e\in G$ is oriented in opposite directions in the two cycles of $\CC$ containing $e$.  We call such orientations \emph{consistent}.
Note that there are exactly two consistent orientations opposite to each other.  There is a well known constraint on winding numbers in embedded graphs.

\begin{observation}\label{obs-win0}
Let $G$ be a graph embedded in an orientable surface $\Sigma$ so that every face of $G$ is closed $2$-cell.
Let $\CC$ be the set consisting of all facial and boundary cycles of $G$.
Let $Q_1$, \ldots, $Q_m$ be the cycles of $\CC$ viewed as closed walks in a consistent orientation.  If $\psi$ is a $3$-coloring of $G$, then
$$\sum_{i=1}^m \omega_\psi(Q_i)=0.$$
\end{observation}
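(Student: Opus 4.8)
The plan is to exploit a term-by-term cancellation: I will show that the combined sum $\sum_{i=1}^m \delta_\psi(Q_i)$ vanishes edge by edge, after which dividing by $3$ yields the claim. The crucial structural input is the definition of a consistent orientation, which guarantees that each edge of $G$ is traversed exactly once in each direction across the cycles of $\CC$.

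First I would record the antisymmetry of $\delta_\psi$, namely that $\delta_\psi(u,v)=-\delta_\psi(v,u)$ for every edge $uv\in E(G)$. This follows from a short case analysis: since $\psi$ is a proper $3$-coloring we have $\psi(v)-\psi(u)\in\{1,2,-1,-2\}$, and $\delta_\psi(u,v)=1$ precisely when $\psi(v)-\psi(u)\equiv 1\pmod 3$ (equivalently $\psi(v)-\psi(u)\in\{1,-2\}$), while $\delta_\psi(u,v)=-1$ precisely when $\psi(v)-\psi(u)\equiv -1\pmod 3$. Replacing $(u,v)$ by $(v,u)$ negates this residue class, hence negates $\delta_\psi$.

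Next I would regroup the sum by edges. Because every face of $G$ is closed $2$-cell and the boundary of $\Sigma$ is formed by the boundary cycles, each edge $e=uv$ of $G$ lies in exactly two cycles of $\CC$: either two facial cycles, or one facial cycle together with one boundary cycle. By the definition of a consistent orientation, $e$ is oriented in opposite directions in these two cycles, so it contributes $\delta_\psi(u,v)$ to one of the terms $\delta_\psi(Q_i)$ and $\delta_\psi(v,u)$ to another. Thus $\sum_{i=1}^m\delta_\psi(Q_i)$ decomposes into per-edge contributions $\delta_\psi(u,v)+\delta_\psi(v,u)$, each equal to $0$ by the antisymmetry established above.

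Consequently $\sum_{i=1}^m\delta_\psi(Q_i)=0$, and since $\omega_\psi(Q_i)=\delta_\psi(Q_i)/3$ for every $i$, we conclude $\sum_{i=1}^m\omega_\psi(Q_i)=0$. The only point requiring care is the bookkeeping that each edge appears in exactly two cycles of $\CC$ with opposite orientations; this is exactly what the $2$-cell hypothesis together with the definition of consistency supplies, so I do not anticipate a genuine obstacle — the statement is in essence a telescoping identity dressed up topologically.
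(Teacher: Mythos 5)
Your proof is correct and is essentially the argument the paper relies on: the observation is stated as ``well known'' without an explicit proof, but the paper uses exactly this edge-by-edge cancellation (antisymmetry of $\delta_\psi$ on properly colored edges, plus each edge appearing in two cycles of $\CC$ with opposite orientations) when it derives the analogous parity constraint in the non-orientable case. No gaps.
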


As the winding number of any $3$-coloring of a $4$-cycle is $0$, we obtain the following constraint on $3$-colorings of quadrangulations.

\begin{corollary}\label{cor-win0}
Let $G$ be a quadrangulation of an orientable surface $\Sigma$.  Let $B_1$, \ldots, $B_k$ be the boundary cycles of $G$ in a consistent orientation.  If $\psi$ is
a $3$-coloring of $G$, then
$$\sum_{i=1}^k \omega_\psi(B_i)=0.$$
\end{corollary}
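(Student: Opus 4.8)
The plan is to derive the corollary directly from Observation~\ref{obs-win0} by showing that the facial cycles contribute nothing to the sum. Since $G$ is a quadrangulation, every edge lies in exactly two cycles of $\CC$ (a boundary edge in one facial cycle and one boundary cycle, an interior edge in two facial cycles), so a consistent orientation of $\CC$ exists and restricts to a consistent orientation of the boundary cycles $B_1,\dots,B_k$; I take these to be the orientations in the statement. Applying Observation~\ref{obs-win0} then gives
$$\sum_{i=1}^k \omega_\psi(B_i) + \sum_{f} \omega_\psi(C_f) = 0,$$
where the second sum runs over the facial $4$-cycles $C_f$. It therefore suffices to prove that $\omega_\psi(C_f)=0$ for every face $f$.

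To see this, first I would record the integrality of the winding number. Identifying the color set $\{1,2,3\}$ with $\mathbb{Z}/3$, the definition of $\delta_\psi$ is arranged precisely so that $\psi(v)\equiv\psi(u)+\delta_\psi(u,v)\pmod 3$ for every edge $uv$. Summing along a closed walk $Q$ that returns to its starting vertex forces $\delta_\psi(Q)\equiv 0\pmod 3$, so $\omega_\psi(Q)=\delta_\psi(Q)/3$ is an integer. This is the one genuinely load-bearing step, and it is where the proper-coloring hypothesis and the specific sign convention in the definition of $\delta_\psi$ are used; everything else is bookkeeping.

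With integrality in hand, the conclusion for a $4$-cycle is immediate: each of the four terms $\delta_\psi(u_i,u_{i+1})$ equals $\pm 1$, so $\delta_\psi(C_f)$ is an even integer with $|\delta_\psi(C_f)|\le 4$, i.e.\ $\delta_\psi(C_f)\in\{-4,-2,0,2,4\}$. The only such value divisible by $3$ is $0$, whence $\omega_\psi(C_f)=0$. Substituting back into the displayed equation leaves $\sum_{i=1}^k \omega_\psi(B_i)=0$, as claimed. I do not expect any real obstacle here; the only point requiring care is matching the orientation conventions between the observation and the corollary, and even this is harmless, since reversing a $4$-cycle merely negates its (already zero) winding number.
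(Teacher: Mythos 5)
Your proof is correct and follows the paper's own route: the paper derives the corollary from Observation~\ref{obs-win0} by noting that the winding number of any $3$-coloring of a $4$-cycle is $0$, which is exactly the fact you verify in detail via the divisibility argument. No issues.
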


For non-orientable surfaces, the situation is a bit more complicated.  Suppose that $G$ is a quadrangulation of
a surface, and let us fix directed closed walks $B_1$, \ldots, $B_k$
tracing the cuffs of $G$.  For each facial cycle, choose an orientation arbitrarily.
Let $D$ denote the directed graph with vertex set $V(G)$ and $uv$ being an edge of $D$ if and only if $uv$ is an edge of $G$ oriented
towards $v$ in both cycles of $\CC$ that contain it.  Let $p(G,B_1,\ldots, B_k)=2|E(D)|\bmod 4$.
Note that $p(G,B_1,\ldots, B_k)$ is independent on the choice of the orientations of the $4$-faces, since reversing an orientation
of a $4$-face with $d$ edges belonging to $D$ changes $2|E(D)|$ by $2(4-2d)\equiv 0\pmod{4}$.

Consider the sum of winding numbers of a $3$-coloring $\psi$ of $G$ on cycles in $\CC$.
As before, the contributions of all edges of $G$ that do not belong to $D$ cancel out, and
since $G$ is a quadrangulation, the winding number on any non-boundary cycle in $\CC$ is $0$.
Hence,
$$\sum_{i=1}^k \delta_\psi(B_i)=2\sum_{uv\in E(D)} \delta_\psi(u,v).$$
Since $\delta_\psi(u,v)=\pm 1$ for every $uv\in E(D)$,
$$2\sum_{uv\in E(D)} \delta_\psi(u,v)\equiv 2|E(D)|\equiv -2|E(D)|\equiv -p(G,B_1,\ldots, B_k)\pmod{4},$$
regardless of the $3$-coloring $\psi$.  Furthermore,
$$\sum_{i=1}^k \delta_\psi(B_i)=3\sum_{i=1}^k \omega_\psi(B_i)\equiv -\sum_{i=1}^k \omega_\psi(B_i)\pmod{4}.$$
Therefore, we get the following necessary condition for the existence of a $3$-coloring.

\begin{observation}\label{obs-gen}
Let $G$ be a quadrangulation of a surface $\Sigma$.  Let $B_1$, \ldots, $B_k$ be the boundary cycles of $G$.  If $\psi$ is
a $3$-coloring of $G$, then
$$\sum_{i=1}^k \omega_\psi(B_i)\equiv p(G,B_1,\ldots, B_k)\pmod{4}.$$
\end{observation}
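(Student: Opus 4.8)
The plan is to evaluate the single quantity $\sum_{C\in\CC}\delta_\psi(C)$, where each boundary cycle is traversed as $B_i$ and each facial cycle carries the orientation fixed when defining $D$, and to compare two expressions for it: one obtained by regrouping the contributions edge by edge, and one obtained by splitting $\CC$ into its facial part and its boundary part. The congruence in the statement will fall out of matching these two expressions modulo $4$ and translating between $\delta_\psi$ and $\omega_\psi$.

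First I would record that $\delta_\psi$ is antisymmetric, i.e. $\delta_\psi(v,u)=-\delta_\psi(u,v)$ for every edge $uv$, which is immediate from the definition since $\psi(u)-\psi(v)\equiv-(\psi(v)-\psi(u))\pmod 3$. Because every face of $G$ is a closed $2$-cell, each edge of $G$ lies on exactly two cycles of $\CC$. When I sum $\delta_\psi$ over all cycles of $\CC$, an edge traversed in opposite directions in its two cycles contributes $\delta_\psi(u,v)+\delta_\psi(v,u)=0$, whereas an edge traversed in the same direction in both cycles — which is precisely an edge recorded in $D$ — contributes $2\delta_\psi(u,v)$. Hence $\sum_{C\in\CC}\delta_\psi(C)=2\sum_{uv\in E(D)}\delta_\psi(u,v)$.

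Next I would invoke the quadrangulation hypothesis: every facial cycle has length $4$, and a $3$-coloring of a $4$-cycle has winding number $0$, so $\delta_\psi(C)=3\omega_\psi(C)=0$ on each facial cycle. The facial cycles therefore drop out and $\sum_{C\in\CC}\delta_\psi(C)=\sum_{i=1}^k\delta_\psi(B_i)$, yielding the identity $\sum_{i=1}^k\delta_\psi(B_i)=2\sum_{uv\in E(D)}\delta_\psi(u,v)$. Reducing modulo $4$ and using $\delta_\psi(u,v)=\pm1$ for each $uv\in E(D)$, every summand satisfies $2\delta_\psi(u,v)\equiv2\pmod 4$, so the right-hand side is $\equiv2|E(D)|\equiv p(G,B_1,\ldots,B_k)\pmod 4$. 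Finally $\delta_\psi(B_i)=3\omega_\psi(B_i)\equiv-\omega_\psi(B_i)\pmod 4$, so the left-hand side is $\equiv-\sum_i\omega_\psi(B_i)$; combining these and using $p\equiv-p\pmod 4$ (valid since $2|E(D)|\equiv-2|E(D)|$) gives the claimed congruence $\sum_{i=1}^k\omega_\psi(B_i)\equiv p(G,B_1,\ldots,B_k)\pmod 4$.

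The arithmetic is elementary once the set-up is in place, so the point demanding care is the combinatorial matching between the two traversal directions of each edge and the defining condition of $D$: I must confirm that ``traversed in the same direction in both cycles'' is exactly the condition $uv\in E(D)$ (equivalently $vu\in E(D)$), and that this accounting is insensitive to the arbitrary orientations chosen for the $4$-faces, which is the orientation-independence of $p$ already recorded after its definition. The only genuinely non-formal ingredient is the vanishing of $\delta_\psi$ on $4$-cycles, which I expect to be the main obstacle to state cleanly; it rests on $\delta_\psi(C)$ being simultaneously even, a multiple of $3$, and bounded in absolute value by the length $4$, hence forced to be $0$.
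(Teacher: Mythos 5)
Your proposal is correct and follows essentially the same route as the paper, which proves Observation~\ref{obs-gen} in the prose immediately preceding its statement: sum $\delta_\psi$ over all cycles of $\CC$, cancel oppositely-traversed edges so that only edges of $D$ survive with a factor of $2$, kill the facial contributions via the vanishing winding number on $4$-cycles, and reduce modulo $4$ using $\delta_\psi=\pm1$ on $E(D)$ and $3\equiv-1\pmod 4$. The only cosmetic difference is that you reach $2|E(D)|\equiv p$ directly while the paper passes through $-2|E(D)|\equiv -p$; both rely on the same observation that $2x\equiv-2x\pmod 4$.
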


If a $3$-coloring $\psi$ of the boundary cycles satisfies the condition of Observation~\ref{obs-gen}, we say that $\psi$ is
\emph{parity-compliant}.

We say that a coloring $\psi$ of the boundary cycles of a quadrangulation of a surface $\Sigma$ \emph{satisfies the winding number constraint}
if either
\begin{itemize}
\item $\Sigma$ is orientable and the sum of winding numbers of $\psi$ on the boundary cycles of $G$ in their consistent orientation is $0$, or
\item $\Sigma$ is non-orientable and $\psi$ is parity-compliant.
\end{itemize}

The structure theorem of Dvo\v{r}\'ak, Kr\'al' and Thomas~\cite{trfree6} now can be stated as follows.

\begin{theorem}[\cite{trfree6}]\label{thm-struct}
For every surface $\Sigma$ and integer $k\ge 0$, there exists a constant $\beta_0$ with the following property.  Let $G$ be a triangle-free graph embedded in $\Sigma$
so that every cuff of $\Sigma$ traces a cycle in $G$ and so that the sum of the lengths of the boundary
cycles of $G$ is at most $k$.  Suppose that every contractible $4$-cycle in $G$ bounds a face.  Then $G$ has a subgraph $H$ with at most $\beta_0$
vertices, such that $H$ contains all the boundary cycles and each face $h$ of $H$ satisfies one of the following (where $G[h]$ is the subgraph
of $G$ drawn in the closure of $h$).
\begin{enumerate}[(a)]
\item Every precoloring of the boundary of $h$ extends to a $3$-coloring of $G[h]$, or
\item $G[h]$ is a quadrangulation and every precoloring of the boundary of $h$ which satisfies the winding number constraint
extends to a $3$-coloring of $G[h]$, or
\item $h$ is an open cylinder and $G[h]$ is its quadrangulation, or
\item $h$ is an open cylinder and both boundary cycles of $h$ have length exactly $4$.
\end{enumerate}
\end{theorem}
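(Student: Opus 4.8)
The plan is to prove Theorem~\ref{thm-struct} by induction on the Euler genus of $\Sigma$, taking as base cases the structure of triangle-free graphs in the disk and in the cylinder with precolored boundary developed in the earlier parts of this series. Fixing $\Sigma$ and $k$, I would argue from a minimal counterexample. Since the conclusion is about coloring-extension behavior, a compactness argument lets me pass to a subgraph $G_0 \supseteq B_1 \cup \cdots \cup B_k$ that is \emph{critical}: deleting any edge or internal vertex strictly enlarges the set of boundary precolorings that extend to a $3$-coloring. The task then reduces to bounding the part of $G_0$ lying outside a controlled family of quadrangulation regions.

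First I would dispose of small-representativity configurations. If $G$ carries a non-contractible cycle $C$ whose length is bounded by a function $f(\Sigma,k)$, I cut $\Sigma$ along $C$. This produces a surface $\Sigma'$ of strictly smaller Euler genus (or with more cuffs) and a graph $G'$ whose boundary consists of the original boundary cycles together with one or two copies of $C$, so the total boundary length stays at most $k + 2f(\Sigma,k)$. Applying the inductive hypothesis to $(\Sigma', G')$ yields a bounded skeleton $H'$, and re-gluing $H'$ along $C$ produces the desired $H$ for $G$. The routine check is that the four face-types survive re-gluing and that $C$, being short, can be absorbed into $H$ without violating the size bound.

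The substantial case is that of \emph{large representativity}, where $G$ has no short non-contractible cycle. Here the embedding is locally planar, so around every point $G$ looks like a triangle-free plane graph, and the planar theory applies: a triangle-free plane region all of whose short cycles bound faces is $3$-colorable, and the critical such regions with a bounded precolored boundary have bounded size. Consequently $G_0$ admits a bounded skeleton $H$, obtained by adjoining to the boundary cycles and the finitely many topologically essential cycles a bounded number of further edges, chosen so that no residual obstruction survives in any face. Each face $h$ of $H$ is then either flexible, in which case every boundary precoloring extends and we are in case (a), or else $G[h]$ is a quadrangulation whose sole obstruction is the winding-number arithmetic of Observations~\ref{obs-win0} and~\ref{obs-gen}. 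A quadrangulation face homeomorphic to a disk, or of higher genus, falls under (b); the two residual possibilities that resist a clean winding-number criterion are precisely the open-cylinder cases (c) and (d).

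The main obstacle is exactly this last classification, and specifically the cylinder. For a disk or a surface piece of higher genus one can establish sufficiency of the winding-number constraint by an explicit coloring procedure, yielding case (b). But for an open cylinder bounded by two precolored cycles the winding number is not a complete invariant: colorability can depend on finer data than the winding numbers of the two boundaries, so one cannot guarantee type (b) and must instead record the region verbatim, as in (c) or (d). Determining exactly which cylinder precolorings extend---the gap left open by \cite{trfree6}---is what the present paper sets out to resolve, so in this plan I would isolate those cylinders as dedicated output faces and defer their fine analysis to the treatment that follows.
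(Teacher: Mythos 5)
First, a point of reference: the paper you are reading does not prove Theorem~\ref{thm-struct} at all --- it is quoted verbatim from \cite{trfree6} and used as a black box, so there is no in-paper proof to compare against. Your sketch does point in the direction of the actual strategy in the literature (criticality via a compactness/minimality argument, a dichotomy between short non-contractible cycles, along which one cuts and inducts on the Euler genus, and the locally planar case), but as a proof it has genuine gaps, and the decisive one is in the large-representativity case. You assert that ``the critical such regions with a bounded precolored boundary have bounded size'' and then conclude ``consequently $G_0$ admits a bounded skeleton $H$.'' The first assertion is a theorem only for the disk with a single precolored cycle; for a region with two or more precolored boundary cycles it is \emph{false} --- arbitrarily long triangle-free quadrangulations of the cylinder are critical, which is exactly why the unbounded cases (c) and (d) appear in the statement. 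So the inference does not go through, and nothing in your outline supplies the actual engine of the proof, namely the quantitative bound on critical embedded triangle-free graphs (a weight function counting faces of length greater than $4$, shown to be bounded in terms of the genus and boundary length), which is what forces a critical graph to be a quadrangulation outside a bounded part and lets one carve out the cylinder faces.

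Second, several steps you label routine are themselves substantial theorems. That every precoloring of a quadrangulation of the disk satisfying the winding-number condition extends, and likewise for quadrangulations of general surfaces with large edge-width (your case (b)), cannot be dispatched by ``an explicit coloring procedure''; these are among the main results of the prior papers in the series. In the cutting step, note also that the induction parameter $k$ grows to $k+2f(\Sigma,k)$ after cutting, so the induction must be organized on genus with $k$ universally quantified at each genus level, and one must verify that the conclusion for $G'$ (whose boundary includes the two copies of $C$) really yields the conclusion for $G$ --- in particular that absorbing $C$ into $H$ keeps the faces and their types unchanged, which works only because $C$ itself is retained in the skeleton. None of these issues is fatal to the overall plan, but as written the proposal is an outline of where a proof would live rather than a proof: the bounded-weight theorem for critical graphs and the winding-number sufficiency results are the content, and they are missing.
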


Given the subgraph $H$ as in the theorem, we can decide whether a given precoloring of the boundary cycles of $G$ extends to a $3$-coloring of $G$,
by first trying all possible (constantly many) $3$-colorings of $H$ that extend the precoloring, and then testing whether they extend to the subgraphs $G[h]$ for
each face $h$ of $H$.  In the cases (a) and (b), the theorem also shows how to test the existence of the extension to $G[h]$.  In the cases (c) and (d),
much fewer details are given (although the structure is still sufficiently restrictive to enable many applications of the theorem).  The goal of this paper
is to fill in this gap.  Before stating our result, let us first give a few more definitions.

We construct a sequence of graphs $T_1$, $T_2$, \ldots, which we call \emph{Thomas-Walls graphs} (Thomas and Walls~\cite{tw-klein} proved that they are exactly
the $4$-critical graphs that can be drawn on Klein bottle without contractible cycles of length at most $4$).
Let $T_1$ be equal to $K_4$.  For $n\ge 1$, let $uv$ be any edge of $T_n$ that belongs to two triangles and let $T_{n+1}$ be obtained from $T_n-uv$
by adding vertices $x$, $y$ and $z$ and edges $ux$, $xy$, $xz$, $vy$, $vz$ and $yz$.  The first few graphs of this sequence are drawn in Figure~\ref{fig-thomaswalls}. 
The sequence is unique but embeddings in the plane are not combinatorially unique. 
\begin{figure}
\begin{center}
\includegraphics[scale=0.8]{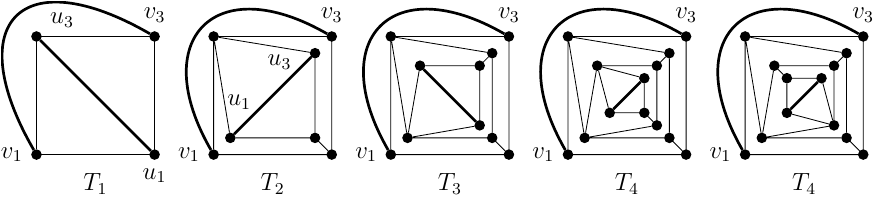}
\end{center}
\caption{Some Thomas-Walls graphs (with two different drawings of $T_4$).}\label{fig-thomaswalls}
\end{figure}
For $n\ge 2$, note that $T_n$ contains unique $4$-cycles $C_1=u_1u_2u_3u_4$ and $C_2=v_1v_2v_3v_4$ such that $u_1u_3,v_1v_3\in E(G)$.
Let $T'_n=T_n-\{u_1u_3,v_1v_3\}$. 
We call the graphs $T'_1$, $T'_2$, \ldots \emph{reduced Thomas-Walls graphs}, and we say that $u_1u_3$ and $v_1v_3$ are their \emph{interface pairs}.
Note that for $n\ge 3$, the $4$-cycles $C_1$ and $C_2$ are vertex-disjoint, and thus $T'_n$ has an embedding in the cylinder with boundary cycles $C_1$ and $C_2$ (as we mentioned before, this embedding is not unique for $n\ge 4$).
We always use only such embeddings with boundary $4$-cycles when drawing $T_i$ in the cylinder.

A \emph{patch} is a graph drawn in the disk with a cycle $C$ of length $6$ tracing the cuff, such that $C$ has no chords and every face of the patch other than the one bounded by $C$ has length $4$.
Let $G$ be a graph embedded in a surface.
Let $G'$ be any graph which can be obtained from $G$ as follows.
Let $S$ be an independent set in $G$ such that every vertex of $S$ has degree $3$.  For each vertex $v\in S$
with neighbors $x$, $y$ and $z$, remove $v$, add new vertices $a$, $b$ and $c$ (drawn very close to the original position of $v$) and $6$-cycle
$C=xaybzc$, and draw any patch with the boundary cycle $C$ in the disk bounded by $C$.
We say that any such graph $G'$ is obtained from $G$ by \emph{patching}.
This operation was introduced by Borodin et al.~\cite{4c4t} in the context of describing planar $4$-critical graphs with exactly $4$ triangles.

Consider a reduced Thomas-Walls graph $G=T'_n$ for some $n\ge 1$, with interface pairs $u_1u_3$ and $v_1v_3$.
A \emph{patched Thomas-Walls graph} is any graph obtained from such a graph $G$ by patching, and $u_1u_3$ and $v_1v_3$ are
its interface pairs.

Next, let us construct another class of graphs, \emph{forced extension quadrangulations}.  Let $G$ be a graph
embedded in the cylinder whose holes are contained in distinct faces of $G$ bounded by (not necessarily disjoint)
cycles $C_1$ and $C_2$ of the same length $k$ (let us remark that these cycles are disjoint from the cuffs).
We say that $G$ is a forced extension quadrangulation if all other
faces of $G$ have length $4$, every non-contractible cycle in $G$ has length at least $k$, and
there is a sequence $K_1$, \ldots, $K_n$ of non-contractible $k$-cycles in $G$ such that $C_1=K_1$, $C_2=K_n$
and $K_i$ intersects $K_{i+1}$ in at least one vertex for $1\le i\le n-1$.
Note that if $C_1$ and $C_2$ are vertex-disjoint, then $G$ can be drawn as a quadrangulation of the cylinder with boundary cycles $C_1$ and $C_2$.
If the distance between $C_1$ and $C_2$ is at least $4k$, we say that $G$ is a \emph{wide forced extension quadrangulation}.

Our main result now can be stated as follows.
\begin{theorem}\label{thm-structmain}
For every surface $\Sigma$ and integer $k\ge 0$, there exists a constant $\beta$ with the following property.  Let $G$ be a triangle-free graph embedded in $\Sigma$
so that every cuff of $\Sigma$ traces a cycle in $G$ and so that the sum of the lengths of the boundary
cycles of $G$ is at most $k$.  Suppose that every contractible $(\le\!5)$-cycle in $G$ bounds a face.  Then $G$ has a subgraph $H$ with at most $\beta$
vertices, such that $H$ contains all the boundary cycles and each face $h$ of $H$ satisfies one of the following (where $G[h]$ is the subgraph
of $G$ drawn in the closure of $h$).
\begin{enumerate}[(a)]
\item Every precoloring of the boundary of $h$ extends to a $3$-coloring of $G[h]$, or
\item $G[h]$ is a quadrangulation and every precoloring of the boundary of $h$ which satisfies the winding number constraint
extends to a $3$-coloring of $G[h]$, or
\item $h$ is an open cylinder and $G[h]$ is its wide forced extension quadrangulation with boundary cycles whose length is divisible by $3$, or
\item $h$ is an open cylinder and $G[h]$ is a patched Thomas-Walls graph.
\end{enumerate}
\end{theorem}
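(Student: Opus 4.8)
The plan is to deduce Theorem~\ref{thm-structmain} from the structure theorem of Dvo\v{r}\'ak, Kr\'al' and Thomas (Theorem~\ref{thm-struct}) by refining its two underspecified cylinder cases. Since every contractible $(\le\!5)$-cycle of $G$ bounds a face, a fortiori every contractible $4$-cycle bounds a face, so Theorem~\ref{thm-struct} applies and produces a subgraph $H_0\subseteq G$ of bounded size that contains all boundary cycles, each of whose faces $h$ falls into one of the cases (a)--(d) of that theorem. Cases (a) and (b) of Theorem~\ref{thm-struct} coincide verbatim with cases (a) and (b) of Theorem~\ref{thm-structmain}, so such faces require no further work; moreover the hypothesis that every contractible $(\le\!5)$-cycle bounds a face is inherited by each $G[h]$ and may be used freely below. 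It therefore suffices to treat a face $h$ that is an open cylinder with $G[h]$ either a quadrangulation (old case (c)) or a graph both of whose boundary cycles have length $4$ (old case (d)), and in each such case to enlarge $H_0$ by only boundedly many further vertices, so that every resulting sub-face of the enlarged graph $H$ satisfies one of (a)--(d) of Theorem~\ref{thm-structmain}.

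Consider first a cylindrical quadrangulation $G[h]$. Here I would build on the winding number machinery of Corollary~\ref{cor-win0} and Observation~\ref{obs-gen}: the cylinder is orientable, and in a quadrangulation of it the winding number of a fixed $3$-coloring is invariant as a noncontractible cycle sweeps across the cylinder (applying Corollary~\ref{cor-win0} to the quadrangulated sub-cylinders), so every extendable precoloring of the two cuffs satisfies the winding number constraint. The task is then to separate \emph{flexible} behaviour, where conversely every winding-number-compliant precoloring extends and $h$ may be kept as case (b), from \emph{rigid} behaviour, where the coloring is forced to propagate across the cylinder. I would examine the family of shortest noncontractible cycles, of some length $\ell\le k$: roughly, when these can be realized pairwise disjointly the quadrangulation is flexible, whereas when every two consecutive shortest noncontractible cycles must intersect the coloring propagates in a forced manner, and one checks that a genuine (non-winding-trivial) obstruction to extension is available only when $3\mid\ell$. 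Adding boundedly many tight $\ell$-cycles to $H_0$ then cuts $G[h]$ into sub-cylinders: each sub-cylinder whose two cuffs are at distance less than $4\ell$ has size $O(\ell^2)$ and is absorbed into $H$ outright, each wide flexible piece is case (b), and each wide rigid piece is a wide forced extension quadrangulation with $3\mid\ell$, i.e.\ case (c). The number of cuts needed is bounded because the cuff lengths, and hence $\ell$ and the total size of $H_0$, are bounded.

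Now consider a cylinder $h$ both of whose boundary cycles have length $4$; we may assume that some precoloring of these two $4$-cycles fails to extend, as otherwise $h$ is case (a). The plan is to pass to a subgraph of $G[h]$ that is critical with respect to such a precoloring and to identify its noncontractible core. Using the hypothesis that every contractible $(\le\!5)$-cycle bounds a face to exclude all short contractible cycles other than faces, a face-counting or discharging argument should show that this core is a reduced Thomas-Walls graph $T'_n$ whose interface pairs lie on the two boundary $4$-cycles, while the $3$-colorable regions of $G[h]$ attach to it along chordless $6$-cycles exactly as in the patching operation. Combining the Thomas and Walls classification~\cite{tw-klein} with the patching description of Borodin et al.~\cite{4c4t} then identifies $G[h]$ as a patched Thomas-Walls graph, which is case (d).

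The step I expect to be the main obstacle is the coloring analysis of the cylindrical quadrangulation: proving the clean flexible/rigid dichotomy, showing that the rigid quadrangulations are precisely the forced extension quadrangulations and that genuine obstructions force $3\mid\ell$, and controlling the placement of the tight $\ell$-cycles so that only boundedly many cuts are needed and the wide residual pieces are exactly those of case (c). The length-$4$ case is more combinatorial, but its difficulty lies in using the $(\le\!5)$-cycle hypothesis to rule out every critical configuration other than the (patched) reduced Thomas-Walls graphs, so that the classification is exhaustive.
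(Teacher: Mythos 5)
Your overall skeleton coincides with the paper's: apply Theorem~\ref{thm-struct}, keep its cases (a) and (b) verbatim, and refine the two cylinder cases. For the quadrangulated cylinder your plan is essentially the paper's Lemma~\ref{lemma-feq}: cut along boundedly many tight non-contractible cycles, show wide ``flexible'' pieces satisfy (b) and wide ``rigid'' pieces are wide forced extension quadrangulations with length divisible by $3$. But the content of that step is exactly the dichotomy you defer as ``the main obstacle'': the paper gets it from two imported results (Lemmas~\ref{lemma-joint} and~\ref{lemma-cylcol} of~\cite{trfree5}) together with two new lemmas --- a homomorphism of the rigid pieces onto a $k$-cycle (Lemma~\ref{lemma-hom}) and an offset argument showing non-forced pieces are flexible (Lemma~\ref{lemma-nonfeq}) --- so your treatment of old case (c) is a statement of the goal rather than a proof. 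A smaller inaccuracy: narrow sub-cylinders are not ``absorbed outright'' with $O(\ell^2)$ vertices (Lemma~\ref{lemma-feq} makes no separating-$4$-cycle assumption); the paper cuts them along a shortest path between the cuffs and invokes the disk case, Corollary~\ref{cor-disk}.

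The genuine gap is in old case (d). Your claim that the critical core of $G[h]$ must be a (patched) reduced Thomas-Walls graph is false: the classification of critical triangle-free graphs in the cylinder with two distant boundary $4$-cycles (Theorem~\ref{thm-mfar}, the main theorem of the companion paper~\cite{cylgen-part2}, which this paper simply cites) has a second outcome, namely that $G[h]$ is a near $3,3$-quadrangulation. Such graphs are not patched Thomas-Walls graphs and cannot be filed under conclusion (d) of Theorem~\ref{thm-structmain}; the paper disposes of them by peeling off the two $5$-faces adjacent to the boundary $4$-cycles and feeding the quadrangulated middle back into Lemma~\ref{lemma-feq}, so they land in conclusions (b)/(c). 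Any ``face-counting or discharging argument'' for this case must discover and accommodate this family, and in effect re-prove Theorem~\ref{thm-mfar} --- an entire paper's worth of work that cannot be asserted to ``should'' follow. You also omit two needed subcases: when the two boundary $4$-cycles are at distance less than $D$ (Theorem~\ref{thm-mfar} does not apply; the paper cuts along a shortest path and reapplies Theorem~\ref{thm-struct} in the disk), and the framing vertices that Theorem~\ref{thm-mfar} permits around the interface pairs, which is why the paper's subgraph $F_h$ includes the $4$-cycles containing the interface pairs so that the patched Thomas-Walls graph is the graph of an interior face.
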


Let us remark that Aksenov~\cite{aksenov} showed that in a triangle-free planar graph $G'$, every precoloring of a $(\le\!5)$-cycle extends to a $3$-coloring of $G'$.  Hence, given an embedded graph,
removing vertices and edges contained inside a contractible $(\le\!5)$-cycle does not affect the $3$-coloring properties of the graph.  Therefore, the assumption of Theorem~\ref{thm-structmain}
that $G$ contains no non-facial contractible $(\le\!5)$-cycles does not affect its generality.

Which precolorings of the boundary cycles of a patched Thomas-Walls graph $F$ extend to a $3$-coloring of $F$ was determined in
the first paper of this series, see Lemma 2.7 of~\cite{cylgen-part1}.  In Section~\ref{sec-feq}, we describe which precolorings
of the boundary cycles of a wide forced extension quadrangulation $F$ extend to a $3$-coloring of $F$, thus making
Theorem~\ref{thm-structmain} fully explicit about the $3$-coloring properties of $G$.  In Section~\ref{sec-cyl}, we prove
Theorem~\ref{thm-structmain} for quadrangulations of the cylinder.  Finally, Section~\ref{sec-main} is devoted
to the proof of Theorem~\ref{thm-structmain}.

\section{Forced extension quadrangulations}\label{sec-feq}

A function $\theta:V(G)\to V(H)$ is a \emph{homomorphism} if $\theta(u)\theta(v)\in E(H)$ for every edge $uv\in E(G)$.
If $C'$ and $C$ are cycles of the same length, then a homomorphism $\theta:V(C')\to V(C)$ is \emph{cyclic} if
it is an isomorphism.  If $\theta':V(C')\to V(C)$ is another cyclic homomorphism, we say that $\theta'$ and $\theta$
are \emph{offset by $2$} if the distance between $\theta(v)$ and $\theta'(v)$ in $C$ is $2$ for every $v\in V(C')$.
If $X \subseteq V(G)$ and $\theta:V(G)\to V(H)$, then $\theta\restriction X$ denotes the restriction of a $\theta$ to $X$.

\begin{lemma}\label{lemma-hom}
Let $G$ be a graph embedded in the cylinder so that the holes of the cylinder are contained in distinct faces bounded
by cycles $C_1$ and $C_2$ of the same length $k$,
such that all other faces of $G$ have length $4$.  Suppose that every non-contractible cycle in $G$ has
length at least $k$.  Let $C$ be a $k$-cycle.  Then there exists a homomorphism $\theta:V(G)\to V(C)$ such that $\theta$ restricted to $C_1$ as well as $C_2$
is cyclic.  Furthermore, if $G$ is not a forced extension quadrangulation, then there exists another such homomorphism $\theta'$ such that
$\theta'\restriction V(C_1)=\theta\restriction V(C_1)$, and $\theta'\restriction V(C_2)$ and $\theta\restriction V(C_2)$ are offset by $2$.
\end{lemma}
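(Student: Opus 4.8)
The plan is to identify $V(C)$ with $\mathbb{Z}_k$, so that consecutive vertices of $C$ differ by $\pm 1$, and to view a homomorphism $\theta\colon V(G)\to V(C)$ as an assignment of elements of $\mathbb{Z}_k$ to the vertices of $G$ in which the two ends of every edge differ by exactly $\pm 1\pmod k$. The restriction of $\theta$ to $C_1$ (or $C_2$) is then cyclic precisely when $\theta$ wraps once around $C$ monotonically, i.e.\ the $k$ signed steps along the cycle are all equal in sign; this is the form of the conclusion I will verify at the end using the length hypothesis. To obtain some $\theta$, I would pass to the universal cover $\pi\colon\widetilde G\to G$. Since $G$ lies in the cylinder, $\widetilde G$ is a quadrangulation of an infinite strip invariant under the deck translation $\tau$, and because all of its internal faces are $4$-faces it is bipartite. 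I would construct a homomorphism $\widetilde\theta\colon V(\widetilde G)\to\mathbb{Z}$ to the two-way-infinite path (adjacent vertices mapping to consecutive integers) which is equivariant in the sense $\widetilde\theta\circ\tau=\widetilde\theta+k$; such a map descends to the desired $\theta$.

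To build $\widetilde\theta$, fix it on the lift of $C_1$ to be the monotone coordinate along that path and extend inward by propagating across $4$-faces: in a face with vertices $a,b,c,d$ in order, once three values are known the fourth is forced when its two neighbours in the face already carry values differing by $2$, and otherwise has exactly two admissible values. The signed steps around any $4$-face telescope to $0$, so the only possible obstruction to consistency would be travelling around a non-contractible loop; as the strip is simply connected there is none, whence $\widetilde\theta$ is well defined, and making the free choices $\tau$-equivariantly gives the equivariance. Here the width hypothesis enters: along the lift of any non-contractible cycle $\widetilde\theta$ changes by exactly $k$ over one period, and since the cycle has length at least $k$ with each step changing $\widetilde\theta$ by $\pm1$, the total change can equal $k$ only if the length equals $k$ and all steps agree in sign. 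Applying this to $C_1$ and $C_2$ shows $\theta$ is monotone, hence cyclic, on each; the same computation shows every shortest non-contractible cycle (every non-contractible $k$-cycle) is traversed monotonically by $\theta$.

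For the ``furthermore'' part I would orient $G$ into a digraph $\vec G$ by directing $uv$ from $u$ to $v$ whenever $\theta(v)-\theta(u)=+1$ in $\mathbb{Z}_k$. The key structural fact is that the simple directed cycles of $\vec G$ are exactly the shortest non-contractible $k$-cycles: a directed cycle cannot be contractible, since it would lift to a closed walk in $\widetilde G$ along which $\widetilde\theta$ strictly increases, so it is non-contractible and, by the length computation above, has length exactly $k$; conversely every shortest non-contractible cycle is monotone, hence directed. Consequently $G$ is a forced extension quadrangulation---that is, $C_1$ and $C_2$ are joined by a chain of pairwise-intersecting shortest $k$-cycles---if and only if $C_1$ and $C_2$ lie in the same strongly connected component of $\vec G$, because chains of directed cycles sharing vertices are exactly what links vertices inside a strong component.

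If $G$ is not a forced extension quadrangulation, then $C_1$ and $C_2$ lie in distinct strong components, so the acyclic condensation cannot contain directed paths both from $C_2$ to $C_1$ and from $C_1$ to $C_2$; hence at least one of two cases holds. If no directed path runs from $C_2$ to $C_1$, let $R$ be the set of vertices reachable from $V(C_2)$ along directed paths: then $R$ is closed under out-edges and avoids $C_1$, and setting $\theta'=\theta-2$ on $R$ and $\theta'=\theta$ elsewhere keeps every edge a $\pm1$ step, since each edge crossing into $R$ is directed into $R$ and so has its step changed from $+1$ to $-1$, while edges inside or outside $R$ are unchanged. If instead no directed path runs from $C_1$ to $C_2$, the symmetric construction using the set reachable from $V(C_2)$ along reverse-directed edges together with the shift $\theta'=\theta+2$ works. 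In either case $\theta'$ agrees with $\theta$ (hence is cyclic) on $C_1$ and on $C_2$ equals $\theta$ shifted by $\pm2$, so $\theta'\restriction V(C_2)$ and $\theta\restriction V(C_2)$ are cyclic and offset by $2$, as required. I expect the equivalence ``forced extension quadrangulation $\iff$ $C_1,C_2$ share a strong component of $\vec G$'' to be the technical heart, since it requires faithfully translating the definition via chains of intersecting $k$-cycles into the digraph language and pinning down the directed cycles as precisely the shortest non-contractible cycles.
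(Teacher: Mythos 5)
Your overall strategy is genuinely different from the paper's (which proves the lemma by induction on $|V(G)|$, splitting along non-contractible $k$-cycles and contracting opposite vertices of $4$-faces), and the second half of your argument is attractive: the identification of the simple directed cycles of $\vec G$ with the monotone non-contractible $k$-cycles is correct, the equivalence between ``forced extension quadrangulation'' and ``$C_1,C_2$ in one strong component'' holds (the needed combinatorial fact, that two vertices of a strong component are linked by a chain of pairwise-intersecting simple directed cycles, follows by induction on the length of a closed walk through both), and the reachable-set shift $\theta'=\theta\mp2$ is verified correctly. The winding computation showing that any equivariant homomorphism is automatically monotone on every non-contractible $k$-cycle is also fine.

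The genuine gap is the existence of the equivariant lift $\widetilde\theta$. ``Extend inward by propagating across $4$-faces'' with free choices is not a construction: a vertex whose value was fixed (or freely chosen) while processing one face may later be forced to a different value by another face containing it, and the remark that signed steps around a $4$-face telescope to $0$ is a property of an already-existing homomorphism, not an argument that the propagation never conflicts. The telltale sign is that your existence argument never invokes the hypothesis that every non-contractible cycle of $G$ has length at least $k$ --- yet without that hypothesis no equivariant homomorphism can exist (its restriction to a non-contractible cycle of length $k'<k$ would have to change by $\pm k$ over $k'$ steps of $\pm1$). A correct version must locate where this hypothesis enters the existence proof; for instance, one can show that the hypothesis forces the lift $\widetilde{C_1}$ to be isometrically embedded in $\widetilde G$ (a shortcut between two of its vertices would, after projecting and closing up along $C_1$, produce a non-contractible cycle of length less than $k$), note that $\widetilde G$ is bipartite since all its cycles bound unions of $4$-faces, and then extend the coordinate on $\widetilde{C_1}$ to all of $\widetilde G$ as a parity-respecting $1$-Lipschitz map, e.g.\ $\widetilde\theta(v)=\min_{u\in V(\widetilde{C_1})}\bigl(\widetilde\theta(u)+d(u,v)\bigr)$, checking that the minimum is attained. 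With such a repair your proof goes through and is arguably more conceptual than the paper's induction; as written, the foundational step is missing. (A minor shared caveat: ``offset by $2$'' is vacuous for $k=3$, but the paper's own proof has the same issue.)
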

\begin{proof}
We prove the lemma by induction on the number of vertices of $G$.

Suppose that $G$ contains a non-contractible $k$-cycle $K$ distinct from $C_1$ and $C_2$.
For $i\in\{1,2\}$, let $G_i$ be the subgraph of $G$ drawn between $C_i$ and $K$.  Let $\theta_i:V(G_i)\to V(C)$
be the homomorphism obtained by the induction hypothesis, chosen so that $\theta_1\restriction V(K)=\theta_2\restriction V(K)$.
Then, $\theta_1\cup \theta_2$ gives a homomorphism from $G$ to $C$ as required.
If $G$ is not a forced extension quadrangulation, then by symmetry, we can assume that $G_2$ is not a forced extension quadrangulation,
and thus there exists a homomorphism $\theta'_2:V(G_2)\to V(C)$ such that $\theta_2\restriction V(K)=\theta'_2\restriction V(K)$,
and $\theta_2\restriction V(C_2)$ and $\theta'_2\restriction V(C_2)$ are offset by $2$.
Hence, $\theta_1\cup \theta'_2$ is the other homomorphism required by the claim of Lemma~\ref{lemma-hom}.

Therefore, we can assume that every non-contractible cycle distinct from $C_1$ and $C_2$
has length greater than $k$.  Note that each such cycle has the same parity as $C_1$, and thus it has
length at least $k+2$.

Suppose that $G$ contains a contractible cycle $K=z_1z_2z_3z_4$ of length 4 which does not bound a face.
Let $G'$ be obtained from $G$ by removing the interior of $K$. 
By the induction hypothesis, $G'$ has a homomorphism $\theta$ to $C$.
Since $G$ is a quadrangulation, the subgraph of $G$ induced by $K$ and its interior is bipartite 
with parts $B_1$ and $B_2$ such that $z_1,z_3 \in B_1$ and $z_2,z_4 \in B_2$.
The homomorphism $\theta$ can be extended to $G$ by for $i \in \{1,2\}$ assigning 
$\theta(v) = \theta(z_i)$ for all $v \in B_i\setminus\{z_{i+2}\}$.
Observe that $G'$ is a forced extension quadrangulation if and only if $G$ is a forced extension quadrangulation.
If $G'$ is not a forced extension quadrangulation then it has another homomorphism $\theta'$
as described in the statement of the lemma, and this homomorphism also extends to $G$.
Hence, we can assume that every contractible $4$-cycle in $G$ bounds a face.

Clearly, the claim of the lemma holds if $G=C_1=C_2$.  Hence, we have $C_1\neq C_2$.  Suppose now that $C_1$
and $C_2$ intersect.  Then $G$ is a forced extension quadrangulation.  Furthermore, there exists a $4$-face
$z_1z_2z_3z_4$ such that $z_1\in V(C_1)\cap V(C_2)$.  We apply induction to the graph obtained from $G$ by identifying $z_2$
with $z_4$ (this graph has no parallel edges, since every contractible $4$-cycle in $G$ bounds a face),
and extend the resulting homomorphism to $G$ by setting $\theta(z_2)=\theta(z_4)$.

Suppose now that $C_1$ and $C_2$ are vertex-disjoint.  Since every non-contractible cycle distinct from $C_1$ and $C_2$
has length greater than $k$, every vertex of $C_1$ has degree at least three.  If all vertices of $C_1$ have degree exactly three,
then let $C_1=v_1v_2\ldots v_k$ and for $1\le i\le k$, let $u_i$ be the neighbor of $v_i$ not belonging to $C_1$.
Observe that $u_1u_2\ldots u_k$ is a $k$-cycle, which must be equal to $C_2$.  If $C=x_1x_2\ldots x_k$, we can define the homomorphisms
$\theta$ and $\theta'$ by setting $\theta(v_i)=\theta'(v_i)=x_i$, $\theta(u_i)=x_{i+1}$ and $\theta'(u_i)=x_{i-1}$ for $1\le i\le k$,
where $x_0=x_k$ and $x_{k+1}=x_1$.

Therefore, we can assume that a vertex $z_1$ of $C_1$ has degree at least $4$, and thus $G$ has a $4$-face $z_1z_2z_3z_4$ with $z_2,z_4\not\in V(C_1)$.
Since all non-contractible cycles in $G$ distinct from $C_1$ and $C_2$ have length at least $k+2$, at most one of $z_2$
and $z_4$ belongs to $C_2$, and the graph $G'$ obtained from $G$ by identifying $z_2$ with $z_4$ contains no non-contractible cycle of
length less than $k$.  Furthermore, since every contractible $4$-cycle in $G$ bounds a face, the graph $G'$ has no parallel edges.

Suppose that $G'$ contains a non-contractible $k$-cycle $K\neq C_1$ that intersects $C_1$ in a vertex $z$.
Then $G$ contains paths $P_1$ from $z_2$ to $z$ and $P_2$ from $z_4$ to $z$ such that $|P_1|+|P_2|=k$ and $K$ is obtained from $P_1\cup P_2$
by identifying $z_2$ with $z_4$.  Let $P'_1$ and $P'_2$ be the subpaths of $C_1$ between $z_1$ and $z$, chosen so that both closed walks
$K_1=P'_1\cup P_1\cup\{z_1z_2\}$
and $K_2=P'_2\cup P_2\cup\{z_1z_4\}$ are contractible.  Note that $|P'_1|+|P'_2|=k$, and thus by symmetry, we can assume that
$|P_1|\le |P'_1|$.  Since $K_1$ is contractible and all faces of $G$ other than the ones bounded by $C_1$ and $C_2$ have length $4$,
it follows that $|K_1|$ is even, and thus $P_1$ and $P'_1$ have opposite parity. Hence, $|P_1|\le |P'_1|-1$,
and $Q=P_1\cup P'_2\cup\{z_1z_2\}$ is a non-contractible closed walk of length at most $k$ in $G$.
Since $G$ contains no non-contractible cycles of length less than $k$, it follows that $Q$ is a cycle.
However, $Q$ is distinct from $C_1$ and $C_2$, which is a contradiction.

We conclude that $G'$ contains no such $k$-cycle $K$, and thus $G'$ is not a forced extension quadrangulation.
By the induction hypothesis, there exist homomorphisms $\theta$ and $\theta'$ from $G'$ to $C$ that are offset by $2$, and they can be extended
to homomorphisms from $G$ to $C$ satisfying the conditions of Lemma~\ref{lemma-hom} by setting $\theta(z_2)=\theta(z_4)$ and $\theta'(z_2)=\theta'(z_4)$.
\end{proof}

Next, let us recall a result from~\cite{trfree5}.

\begin{lemma}[\cite{trfree5}, Corollary 4.7]\label{lemma-joint}
Let $G$ be a quadrangulation of the cylinder with boundary cycles $C_1$ and $C_2$ of the same length $k$,
such that every non-contractible cycle in $G$ has length at least $k$, and the distance between $C_1$
and $C_2$ is at least $4k$.  Let $\psi$ be a precoloring of $C_1\cup C_2$ satisfying the winding number
constraint.  If $\psi$ does not extend to a $3$-coloring of $G$, then $k$ is divisible by $3$ and $\psi$ has winding number
$\pm k/3$ on $C_1$.
\end{lemma}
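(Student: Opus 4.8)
The plan is to translate $3$-colorings into integer \emph{height functions} on the universal cover, reduce the extension problem to a Lipschitz-extension question, and isolate the winding number as the only possible obstruction.

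First I would pass to the universal cover $\tilde G$ of the cylinder, an infinite quadrangulated strip on which the deck group is generated by a translation $\tau$ (going once around the cylinder). Every cycle of $\tilde G$ projects to a contractible closed walk of $G$ and is therefore even, being a sum of $4$-faces, so $\tilde G$ is bipartite; this uses the same parity remark as in the proof of Lemma~\ref{lemma-hom}, that all non-contractible cycles share the parity of $C_1$. Given a $3$-coloring $\phi$ of $G$, I lift it and encode it by an integer height function $\tilde h\colon V(\tilde G)\to\mathbb Z$ that changes by $\pm1$ along every edge with $\tilde h\equiv\phi\pmod 3$; such $\tilde h$ exists and is unique up to an additive constant, because $\tilde G$ is simply connected and the constraint around each $4$-face is automatically satisfied. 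Conversely, since $\tilde G$ is bipartite, the height functions are exactly the integer functions respecting the bipartition parity that are $1$-Lipschitz for the graph metric: across any edge the endpoints have opposite parity, so a step of absolute value at most $1$ is forced to be $\pm1$. The winding number $w$ of $\phi$ on $C_1$ appears as the equivariance relation $\tilde h\circ\tau=\tilde h+3w$, and the winding number constraint is precisely the statement that the same increment $3w$ occurs on both boundary paths, so that a $\tau$-equivariant $\tilde h$ can exist at all.

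The precoloring $\psi$ fixes the restriction of $\tilde h$ to the two boundary paths $\partial_1,\partial_2$ (the lifts of $C_1,C_2$) up to one global constant on each; after fixing $\partial_1$, the only remaining freedom is the height offset $m$ of $\partial_2$. Shifting $m$ by $6$ changes neither the colors (a shift by $6\equiv0\pmod3$) nor the parity, so $\psi$ extends to $G$ iff for some $m$ in a fixed residue class $m_0+6\mathbb Z$ the combined data is $1$-Lipschitz, i.e. $|\tilde h(u)-\tilde h(v)-m|\le d(u,v)$ for all $u\in\partial_1$, $v\in\partial_2$; in that case a McShane-type maximal extension $\tilde h(x)=\max_{u}\big(\tilde h(u)-d(x,u)\big)$ over boundary vertices fills in a valid, parity-respecting, $\tau$-equivariant height function. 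The within-boundary Lipschitz conditions hold automatically because $C_1$ and $C_2$, being shortest non-contractible cycles, are isometric. Thus the problem collapses to whether the interval $I=\bigcap_{u,v}\big[\,c(u,v)-d(u,v),\,c(u,v)+d(u,v)\,\big]$ of admissible offsets, where $c(u,v)=\tilde h(u)-\tilde h(v)$, meets $m_0+6\mathbb Z$.

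The core estimate is that $I$ is long unless the winding is extremal. By $\tau$-equivariance, $c(\tau u,\tau v)=c(u,v)$ and $d(\tau u,\tau v)=d(u,v)$, so $I$ is determined by finitely many pairs, and each defining interval has length $2d(u,v)\ge 2\,\mathrm{dist}(C_1,C_2)\ge 8k$. The centers $c(u,v)$ vary only by the spread of the boundary heights over the footprint of the tight pairs, which is $O(k)$ since each boundary winds at rate $3w$ with $|3w|\le k$. I would show these two facts force $|I|\ge 6$, so that $I$ meets every residue class modulo $6$ and $\psi$ extends, unless the heights climb along a minimum geodesic at exactly the rate of graph distance; this happens only when every edge of $C_1$ (and of $C_2$) is traversed with the same sign, i.e. when $3\mid k$ and the winding number on $C_1$ is $\pm k/3$. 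Small $k$, where $8k$ does not beat the $O(k)$ center spread with margin $6$, I would dispose of by a direct finite check.

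The main obstacle is exactly this quantitative step: bounding the cross distances $d(u,v)$ from below and the center spread from above simultaneously, and arranging the bookkeeping $\tau$-equivariantly so that only finitely many pairs matter. This is where the hypotheses are essential---the width bound $\mathrm{dist}(C_1,C_2)\ge 4k$ makes $2d\ge 8k$ dominate, and the fact that $C_1,C_2$ are shortest non-contractible, hence geodesic and isometric, cycles controls both the within-boundary consistency and the behaviour of distances along the cylinder. Matching the degenerate configuration precisely with the extremal winding $w=\pm k/3$, $3\mid k$, and verifying that in every other case the offset interval genuinely contains an admissible $m$, is the crux of the argument.
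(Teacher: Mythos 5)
First, be aware that this paper contains no proof of Lemma~\ref{lemma-joint}: it is imported verbatim from \cite{trfree5} (Corollary 4.7 there), so you are attempting something the authors deliberately treat as a black box. Your strategy --- encoding $3$-colorings of a quadrangulation as $\pm 1$-step height functions on the universal cover of the cylinder and reducing extendability to a Lipschitz-extension problem solved by a McShane-type maximal formula --- is a legitimate and classical route, and the skeleton is sound: bipartiteness of the cover, the equivariance $\tilde h\circ\tau=\tilde h+3w$, the admissible offsets forming a residue class modulo $6$, and the parity-correctness and equivariance of the max formula all check out.

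The gap is that the entire content of the lemma sits in the one step you do not carry out, and the quantitative claims you substitute for it are not correct as stated. (1) The centers $c(u,v)=\tilde h(u)-\tilde h(v)$ do \emph{not} vary by $O(k)$ over the relevant family of pairs: the constraints include all pairs $(\tau^n u,v)$, whose centers drift by $3wn$ while the radii $d(\tau^n u,v)$ grow only at rate roughly $k$ per period; the lemma is precisely the assertion that the intersection of these intervals degenerates only when the drift rate $3|w|$ equals the growth rate $k$, i.e.\ when $3\mid k$ and $\omega_\psi(C_1)=\pm k/3$. For $3|w|\le k-2$ one must extract a total slack of $6$ by combining the per-period slack $2|n|$ (which suffices only for $|n|\ge 3$) with the hypothesis $d\ge 4k$ (needed for $|n|\le 2$), and one must also treat pairs of pairs in which both the $\partial_1$- and $\partial_2$-coordinates move; none of this is written down. (2) The within-boundary Lipschitz condition is not ``automatic'': you need the infinite lifts $\partial_1,\partial_2$ to be isometrically embedded in $\tilde G$, which requires proving $d_{\tilde G}(u,\tau^n u)\ge |n|k$ (decompose a projected closed walk of winding number $n$ into cycles, at least $|n|$ of which are non-contractible and hence of length at least $k$) together with the local statement that a shortest path between two vertices of $C_1$ is no shorter than the arc of $C_1$ joining them; these are genuine consequences of the hypothesis on non-contractible cycles, not of $C_1$ being shortest per se. (3) The fallback of ``disposing of small $k$ by a direct finite check'' is unavailable: for each fixed $k$ there are infinitely many quadrangulations $G$, so if your main estimate were to fail for some $k$ you would have no argument for those $k$ at all. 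Until the drift-versus-distance comparison in (1) is executed and shown to fail only in the extremal winding case, this is a plan rather than a proof.
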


This makes it easy to analyze the colorings of wide forced extension quadrangulations.
Note that the existence of the homomorphism $\theta$ in the following statement is guaranteed by Lemma~\ref{lemma-hom}.

\begin{lemma}\label{lemma-feqcol}
Let $G$ be a wide forced extension quadrangulation of a cylinder with boundary cycles of length $k$,
and let $\theta:V(G)\to V(C)$ be a homomorphism to a $k$-cycle $C$
such that the restrictions of $\theta$ to both boundary cycles of $G$ are cyclic.
A precoloring $\psi$ of the boundary cycles of $G$ extends to a $3$-coloring of $G$ if and only if
\begin{itemize}
\item $\psi$ satisfies the winding number constraint, and additionally,
\item if $k$ is divisible by $3$ and $\psi$ has winding number $\pm k/3$ on the boundary cycles, 
then $\psi(u)=\psi(v)$ for all vertices $u$ and $v$ in the boundary cycles such that $\theta(u)=\theta(v)$.
\end{itemize}
\end{lemma}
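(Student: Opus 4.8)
The plan is to reduce to the extremal case and then settle that case by a rigidity argument running along the chain of non-contractible $k$-cycles. Since the distance between the boundary cycles is at least $4k$, they are vertex-disjoint and $G$ is a genuine quadrangulation of the cylinder with boundary cycles $C_1$ and $C_2$; hence every $3$-coloring of $G$ satisfies the winding number constraint by Corollary~\ref{cor-win0}, which gives the necessity of the first bullet in both implications. When $\psi$ is \emph{not} extremal --- that is, when $k$ is not divisible by $3$, or $\psi$ does not have winding number $\pm k/3$ on the boundary --- the second bullet is vacuous, and Lemma~\ref{lemma-joint} (whose hypotheses hold because $G$ is a wide quadrangulation of the cylinder in which every non-contractible cycle has length at least $k$) shows that every $\psi$ satisfying the winding number constraint extends. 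It therefore remains to treat the case that $3\mid k$ and $\psi$ has winding number $\pm k/3$ on both boundary cycles, and to prove that here $\psi$ extends if and only if $\psi$ is constant on the fibers of $\theta$ meeting the boundary.

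Fix an orientation of the cylinder and orient every non-contractible cycle consistently with it; with this convention the winding number constraint states exactly that $\psi$ has the same winding number on $C_1$ and on $C_2$, and a coloring of a $k$-cycle has winding number $\pm k/3$ precisely when it is a \emph{rainbow} coloring, increasing (respectively decreasing) by $1$ modulo $3$ along each oriented edge. As $3\mid k$, the $k$-cycle $C$ admits a rainbow coloring $\rho$, and I set $\phi_0=\rho\circ\theta$, which is a proper $3$-coloring of $G$ since $\theta$ is a homomorphism to $C$; choosing the direction of $\rho$ suitably, $\phi_0$ has the same extremal winding number as $\psi$ on the boundary. Two rainbow colorings of a cycle with the same winding number differ by a global shift of colors, so on each boundary cycle we may write $\psi\restriction V(C_i)=\pi_{t_i}\circ(\phi_0\restriction V(C_i))$, where $\pi_t$ denotes the shift $c\mapsto c+t\pmod 3$ and $t_1,t_2\in\{0,1,2\}$. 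Because $\theta\restriction V(C_i)$ is a bijection onto $V(C)$ (by Lemma~\ref{lemma-hom}), two boundary vertices of the same $C_i$ with equal image under $\theta$ must coincide, so the fiber condition has content only for $u\in V(C_1)$ and $v\in V(C_2)$ with $\theta(u)=\theta(v)$; for such a pair $\phi_0(u)=\rho(\theta(u))=\rho(\theta(v))=\phi_0(v)$, whence $\psi(u)=\psi(v)$ if and only if $t_1=t_2$. Thus the fiber condition is equivalent to $t_1=t_2$.

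It remains to show that $\psi$ extends to $G$ if and only if $t_1=t_2$, which is the crux of the argument. For sufficiency, if $t_1=t_2=:t$ then $\pi_t\circ\phi_0$ is a proper $3$-coloring of $G$ agreeing with $\psi$ on both boundary cycles, so $\psi$ extends. For necessity, suppose $\psi$ extends to a $3$-coloring $\phi$, and use the defining chain $C_1=K_1,K_2,\ldots,K_n=C_2$ of non-contractible $k$-cycles with $K_i\cap K_{i+1}\neq\emptyset$. Since $\phi$ has winding number $0$ on every $4$-face, its winding number is a homology invariant and hence the same on all non-contractible cycles, so it is extremal on each $K_i$; the same holds for $\phi_0$. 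Consequently $\phi\restriction V(K_i)$ and $\phi_0\restriction V(K_i)$ are rainbow colorings with equal winding number and therefore differ by a single color shift $\pi_{s_i}$; for consecutive cycles, any vertex $z\in K_i\cap K_{i+1}$ forces $s_i=\phi(z)-\phi_0(z)=s_{i+1}$, so all the shifts coincide, and comparing $K_1=C_1$ with $K_n=C_2$ yields $t_1=s_1=s_n=t_2$. By the equivalence established above this is exactly the fiber condition. The main obstacle is precisely this last rigidity step: one must observe that extremal winding number forces every non-contractible $k$-cycle to be rainbow-colored, and then propagate the a priori only boundary-wise color shift through the interior along the chain of intersecting $k$-cycles --- which is exactly where the hypothesis that $G$ is a forced extension quadrangulation, rather than an arbitrary wide quadrangulation, is used.
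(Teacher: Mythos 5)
Your proof is correct and follows essentially the same route as the paper: dispose of the non-extremal case via Lemma~\ref{lemma-joint}, observe that an extremal coloring must be rainbow on every non-contractible $k$-cycle, and propagate the color shift along the chain $K_1,\ldots,K_n$ of pairwise-intersecting $k$-cycles, with sufficiency given by composing $\theta$ with a rainbow coloring of $C$. Your reformulation in terms of shifts $\pi_{t}$ relative to $\rho\circ\theta$ is just a more explicit packaging of the paper's ``$\alpha=0$'' step.
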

\begin{proof}
Let $G$ be a wide forced extension quadrangulation with boundary cycles $C_1$ and $C_2$ of length $k$,
and let $K_1$, \ldots, $K_n$ be the non-contractible $k$-cycles such that $K_1=C_1$, $K_n=C_2$ and
$K_i$ intersects $K_{i+1}$ for $1\le i\le n-1$.

Let $\psi$ be any precoloring of $C_1\cup C_2$.  If $\psi$ does not satisfy the winding number constraint, then
it does not extend to a $3$-coloring of $G$, as we observed in the introduction (Corollary~\ref{cor-win0}).   Suppose that $\psi$ satisfies
the winding number constraint.  If the winding number of $\psi$ on $C_1$ is not $\pm k/3$, then $\psi$ extends to a $3$-coloring of $G$
by Lemma~\ref{lemma-joint}.

Finally, consider the case that the winding number of $\psi$ on $C_1$ is $\pm k/3$, and in particular, $k$ is divisible by $3$.
Let $C=v_1v_2\ldots v_k$.  Note that $\theta$ is cyclic on cycles $K_1$, \ldots, $K_n$: for each edge $e=uv$ of $G$ with $\theta(u)=v_a$
and $\theta(v)=v_b$, orient $e$ so that the head of $e$ is $v$ if and only if $b\equiv a+1\pmod k$, and for each walk $Q$ in $G$,
let $p(Q)$ denote the difference between the number of edges of $Q$ oriented forwards and backwards.  Since $k\neq 4$, we have $p(Q)=0$
for each $4$-cycle $Q$, and since all faces of $G$ other than the ones bounded by $C_1$ and $C_2$ have length $4$, we conclude that
$p(C_1)=p(K)$ for every non-contractible cycle $K$.  It follows that $p(K_1)=\ldots=p(K_n)=k$, and thus $\theta$ is cyclic
on these cycles.

For $1\le i\le n$, let $K_i=v_{i,1}v_{i,2}\ldots v_{i,k}$, where the labels are chosen so that $\theta(v_{i,j})=v_j$ for $1\le j\le k$.
Consider any $3$-coloring $\varphi$ of $G$, and any $i\in\{1,\ldots, n-1\}$.  Suppose that $\varphi$ has winding number $\pm k/3$
on $K_i$; without loss of generality, we have $\varphi(v_{i,j})=j\bmod 3$ for $1\le j\le k$.  By the winding number constraint,
$\varphi$ has the same winding number on $K_{i+1}$, and thus there exists $\alpha\in\{0,1,2\}$ such that
$\varphi(v_{i+1},j)=(j+\alpha)\bmod 3$ for $1\le j\le k$.  However, $K_i$ and $K_{i+1}$ share a vertex, and thus $\alpha=0$.
We conclude that if $\varphi$ has winding number $\pm k/3$ on $C_1$, then $\varphi(u)=\varphi(v)$ for any $u,v\in V(K_1)\cup \ldots\cup V(K_n)$
such that $\theta(u)=\theta(v)$.

Hence, in the case that $k$ is divisible by $3$ and $\psi$ has winding number $\pm k/3$ on the boundary cycles,
if $\psi$ extends to a $3$-coloring of $G$, then $\psi(u)=\psi(v)$ for all vertices $u$ and $v$ in the boundary cycles such that $\theta(u)=\theta(v)$.
Conversely, if $\psi$ satisfies this condition on the boundary cycles, then we can extend $\psi$ to a $3$-coloring $\varphi$ of $G$ by setting $\varphi(u)=\psi(u')$
for every $u\in V(G)$, where $u'$ is the vertex of $C_1$ such that $\theta(u)=\theta(u')$.
\end{proof}

\section{The structure theorem for quadrangulations of the cylinder}\label{sec-cyl}

Lemma 2.1 of Dvo\v{r}\'ak, Kr\'al' and Thomas~\cite{trfree6} gives the following result on precoloring extension in quadrangulations
of the disk.

\begin{lemma}\label{lemma-diskq}
Let $G$ be a quadrangulation of the disk with boundary cycle $C$.  Let $\psi$ be a $3$-coloring of $C$ with winding number $0$.
If $\psi$ does not extend to a $3$-coloring of $G$, then $G$ contains a path $P$ intersecting $C$ exactly in its endpoints
so that both cycles in $C\cup P$ distinct from $C$ are strictly shorter than $C$.
\end{lemma}

Iterating this lemma, we get the following result

\begin{corollary}\label{cor-disk}
Let $k\ge 4$ be an integer.  For every quadrangulation $G$ of the disk with the boundary cycle $C$ of length $k$,
there exists a subgraph $H\subseteq G$ with at most $2^{k/2}$ vertices,
such that $H$ contains the boundary cycle and for every face $h$ of $H$,
every precoloring of the boundary of $h$ which satisfies the winding number constraint extends to a $3$-coloring of $G[h]$.
\end{corollary}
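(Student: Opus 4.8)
The plan is to deduce Corollary~\ref{cor-disk} from Theorem~\ref{thm-struct} applied with $\Sigma$ the closed disk, viewed as a surface with a single cuff traced by the boundary cycle $B$ of length $k$ (note that $k$ is even, since a quadrangulation of the disk is bipartite). As the disk is orientable with exactly one boundary cycle, Corollary~\ref{cor-win0} shows that for any quadrangulated disk face $h$ with boundary cycle $D$ the winding number constraint is simply $\omega_\psi(D)=0$, which is precisely the hypothesis in option~(b). Before invoking Theorem~\ref{thm-struct} I would remove its hypothesis that every contractible $4$-cycle bounds a face: in the disk every $4$-cycle is contractible, and a non-facial one is separating and bounds a proper subdisk, so by Aksenov's theorem every precoloring of it extends inside. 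Deleting the interiors of the innermost separating $4$-cycles yields a quadrangulation $G^\star$ of the same disk, with the same boundary cycle $B$, in which every $4$-cycle bounds a face, and a precoloring of $B$ extends to $G$ iff it extends to $G^\star$. The same remark transfers any witnessing subgraph $H$ back from $G^\star$ to $G$, since a precoloring of the boundary of a face $h$ that extends to $G^\star[h]$ extends further across the deleted interiors to $G[h]$. I would take $\beta_1$ to be the constant $\beta_0$ of Theorem~\ref{thm-struct} for the disk and this $k$, enlarged by a bounded additive term for the adjustments below.

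Applying Theorem~\ref{thm-struct} to $G^\star$ gives $H$ with each face of type (a)–(d); options (a) and (b) are exactly the desired conclusion. A cylinder face of type (c) or (d) that is not wide has boundedly many vertices, so I would add all of it to $H$, subdividing it into $4$-faces and small disk quadrangulations, each of which trivially satisfies (a) or (b) while keeping $|V(H)|$ bounded. A wide cylinder face whose boundary cycles have length $k'$ with $3\nmid k'$ is also harmless: by Lemma~\ref{lemma-joint} every winding-compliant precoloring of its two boundary cycles extends (for the length-$4$ boundaries of case (d) this is automatic, every $3$-coloring of a $4$-cycle having winding number $0$). Thus the only genuinely troublesome faces are the wide cylinder faces with $3\mid k'$, for which Lemma~\ref{lemma-joint} does exhibit winding-compliant precolorings that fail to extend.

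These I would eliminate using simple-connectivity. Such a face $h$ has an inner boundary cycle $C_2$ which, since $\Sigma$ is a disk, bounds a proper subdisk $\Delta_2$; by Corollary~\ref{cor-win0} every $3$-coloring of $G^\star$ has winding number $0$ on $C_2$, so the dangerous $\pm k'/3$ windings never actually occur there. The cleanest implementation is by induction on $|V(G^\star)|$: decompose $G^\star[\Delta_2]$ by the induction hypothesis, and absorb the collar $h$ into the enclosed subdisk so that the outer cycle $C_1$ becomes the single boundary of a disk face. Its winding constraint $\omega_\psi(C_1)=0$ then forbids the bad windings, and a winding-$0$ precoloring of $C_1$ propagates across the wide collar by Lemma~\ref{lemma-joint} to a winding-$0$ precoloring of $C_2$ and then into the inductively decomposed interior, so the merged face satisfies (b).

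The main obstacle is exactly this last elimination: reorganizing $H$ so that no wide cylinder face of length divisible by $3$ survives, while keeping every other face of type (a) or (b) and $|V(H)|$ bounded. The delicacy is that a cylinder face with both boundary cycles internal can a priori be presented with an arbitrary, in particular $\pm k'/3$, winding on each boundary, and such a precoloring does not extend; the whole point of routing $H$ so that each remaining face is a disk bounded by a single cycle is that the single-cycle constraint $\omega_\psi=0$ of Corollary~\ref{cor-win0} excludes precisely these precolorings, after which Lemma~\ref{lemma-joint} supplies the extension. Carrying the induction through uniformly over all (possibly nested) cylinder faces, controlling the resulting size of $H$, and checking that the merging preserves the per-face extension property at each gluing along $C_1$ and $C_2$ is the step that will require the most care.
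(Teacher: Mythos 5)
The paper does not actually prove this statement: it is quoted as a direct consequence of the disk case of the structure theory of~\cite{trfree6}, with only the parenthetical remark (matching your Aksenov reduction) that separating $4$-cycles may be ignored. Your attempt to re-derive it from Theorem~\ref{thm-struct} as stated is therefore a genuinely different route, and it has a real gap exactly where you flag one. The elimination of a wide cylinder face $h$ with $3\mid k'$ does not work as you describe it. If you keep the inductive decomposition $H_2$ of the subdisk $\Delta_2$ inside $H$, then $C_2\subseteq H$ and the collar between $C_1$ and $C_2$ is still a cylinder face, so nothing has been merged; if instead you discard $C_2$ and $H_2$, the new face is the whole interior of $C_1$, and your induction hypothesis does \emph{not} say that every winding-$0$ precoloring of $C_2$ extends to all of $G^\star[\Delta_2]$ --- it only gives a bounded $H_2$ whose individual faces have the extension property. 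Moreover Lemma~\ref{lemma-joint} does not ``propagate'' a precoloring of $C_1$ to one of $C_2$: it takes a precoloring of \emph{both} cycles as input, and the trace on $C_2$ of an extension across the collar need not extend inward. The argument must run in the opposite direction: first $3$-color the closed subdisk bounded by $C_2$ (Gr\"otzsch), observe that its restriction to $C_2$ has winding number $0$ by Corollary~\ref{cor-win0}, and only then join it to the given winding-$0$ precoloring of $C_1$ across the wide collar via Lemma~\ref{lemma-joint}; with that ordering the entire interior of $C_1$ becomes a single face satisfying (b) and no induction is needed there. Even then, Lemma~\ref{lemma-joint} requires the two boundary cycles to have equal length and all non-contractible cycles of the collar to have length at least that common length, which an arbitrary cylinder face of $H$ need not satisfy, so some preliminary decomposition in the spirit of Lemma~\ref{lemma-feq} is still needed.

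A second, smaller error: a cylinder face that is not wide need not have boundedly many vertices. A forced extension quadrangulation whose chain of $k$-cycles all pass near a common vertex has its two boundary cycles at distance at most $k$ while containing arbitrarily many vertices, so ``narrow'' does not imply ``small,'' and you cannot simply add such a face to $H$. The paper's own device for narrow cylinders (cutting along a shortest path between the boundary cycles and invoking the disk statement for a longer boundary cycle) is what is needed here, but note that inside a proof of Corollary~\ref{cor-disk} itself this step must be organized carefully (e.g.\ by induction on $|V(G)|$ with $k$ quantified appropriately) to avoid circularity.
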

\begin{proof}
We prove the claim by induction on $k$.  If $k=4$, then every precoloring of $C$ extends to a $3$-coloring of $G$ by the result of
Aksenov~\cite{aksenov}, and we can set $H=C$.  Hence, assume that $k\ge 6$.
If $G$ contains no path $P$ intersecting $C$ exactly in its endpoints
such that both cycles in $C\cup P$ distinct from $C$ are strictly shorter than $C$,
then by Lemma~\ref{lemma-diskq} we can again set $H=C$.

Hence, suppose that $G$ contains such a path $P$.   Let $C_1$ and $C_2$ be the cycles of $C\cup P$
distinct from $C$; we have $|C_1|,|C_2|\le k-2$.  For $i\in\{1,2\}$, let $G_i$ be the subgraph of $G$ drawn in the closed disk
bounded by $C_i$, and let $H_i$ be its subgraph obtained by the induction hypothesis.  We can set $H=H_1\cup H_2$;
note that $|V(H)|\le |V(H_1)|+|V(H_2)|\le 2\cdot 2^{(k-2)/2}=2^{k/2}$.
\end{proof}

We need another result of~\cite{trfree5}.
\begin{lemma}[\cite{trfree5}, Lemma 4.5]\label{lemma-cylcol}
Let $G$ be a quadrangulation of the cylinder with boundary cycles $C_1$ and $C_2$, such
that every non-contractible cycle in $G$ distinct from $C_1$ and $C_2$ has length greater than $\max(|C_1|,|C_2|)$.
If the distance between $C_1$ and $C_2$ is at least $|C_1|+|C_2|$, then a precoloring of $C_1\cup C_2$ extends
to a $3$-coloring of $G$ if and only if it satisfies the winding number constraint.
\end{lemma}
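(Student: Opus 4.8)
The plan is to prove the two directions separately, the forward implication being immediate and the reverse one carrying all of the content. For the \emph{only if} direction, recall that the cylinder is orientable, so its only boundary cycles are $C_1$ and $C_2$; hence if a precoloring $\psi$ of $C_1\cup C_2$ extends to a $3$-coloring of $G$, then Corollary~\ref{cor-win0} gives $\omega_\psi(C_1)+\omega_\psi(C_2)=0$ in the consistent orientation, which is exactly the winding number constraint.

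For the \emph{if} direction I would first record the homotopy invariance of the winding number. If $K$ and $K'$ are vertex-disjoint non-contractible cycles of $G$ bounding an annular subregion, then that region together with the $4$-faces it contains is itself a quadrangulation of a cylinder, so Corollary~\ref{cor-win0} forces any $3$-coloring $\varphi$ of $G$ to satisfy $\omega_\varphi(K)=\omega_\varphi(K')$ in a common orientation. Thus a $3$-coloring assigns one common winding number $w$ to all parallel non-contractible cycles, and the constraint says precisely that $\psi$ prescribes the same $w$ on $C_1$ and on $C_2$. Since a cycle of length $\ell$ admits only winding numbers of absolute value at most $\ell/3$, we have $3|w|\le\min(|C_1|,|C_2|)$, and by the girth hypothesis every non-contractible cycle of $G$ has length at least $\min(|C_1|,|C_2|)\ge 3|w|$, hence is long enough to carry winding number $w$.

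The conceptual driver is that the girth hypothesis excludes the rigid configurations in which a winding-compliant precoloring can fail to extend. The source of such a failure is the forced behaviour analysed in Lemma~\ref{lemma-feqcol}, whose $\pm k/3$ exception requires a chain $K_1,\ldots,K_n$ of pairwise intersecting non-contractible cycles of the minimum length joining $C_1$ to $C_2$, i.e. that $G$ be a forced extension quadrangulation. The assumption that every non-contractible cycle other than $C_1$ and $C_2$ has length strictly greater than $\max(|C_1|,|C_2|)$ makes such a chain impossible, since any intermediate $K_i$ distinct from $C_1,C_2$ would be a non-contractible cycle of length at most $\max(|C_1|,|C_2|)$. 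So $G$ is not a forced extension quadrangulation, the rigidity is absent, and informally the cylinder strictly ``widens'' as one moves inward from either boundary. I would make this precise constructively: fix a shortest path $P$ from $C_1$ to $C_2$, so $|P|\ge |C_1|+|C_2|$; extend $\psi$ inward from $C_1$, keeping winding number $w$ on the non-contractible cycles encountered, and do the same from $C_2$. Because the boundaries are at distance at least $|C_1|+|C_2|$, the two partial colorings occupy disjoint collars separated by a wide middle region whose non-contractible cycles all have length $>\max(|C_1|,|C_2|)\ge 3|w|$, where the two prescribed colorings of equal winding $w$ can be interpolated freely. When $|C_1|=|C_2|$ the interpolation can instead be phrased through the homomorphism $\theta$ of Lemma~\ref{lemma-hom} together with its offset-by-$2$ companion $\theta'$, which exists precisely because $G$ is not a forced extension quadrangulation and which supplies the degree of freedom needed to match any winding-compliant $\psi$.

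The step I expect to be the main obstacle is making the inward extension rigorous: the BFS ``layers'' need not themselves be non-contractible cycles, so one must argue carefully that the extension maintaining winding number $w$ never becomes forced and that the two sides can always be reconciled in the middle. This is exactly where both hypotheses are indispensable — the girth bound guarantees that each relevant cycle is strictly longer than $3|w|$, so no analogue of the Lemma~\ref{lemma-feqcol} obstruction can form, and the distance bound $|P|\ge|C_1|+|C_2|$ guarantees enough room between the collars to join them. I would therefore concentrate the technical effort on a local lemma asserting that, in the absence of the forced structure, a winding-$w$ coloring of one non-contractible cycle extends across a single layer of $4$-faces to any wider parallel cycle, and then iterate it from both boundaries.
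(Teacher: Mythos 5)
First, note that the paper does not prove this statement at all: it is imported verbatim as Lemma~4.5 of~\cite{trfree5}, so there is no in-paper proof to compare against. Judged on its own merits, your ``only if'' direction is fine (Observation~\ref{obs-win0} and Corollary~\ref{cor-win0} on the orientable cylinder), but the ``if'' direction has a genuine gap. You reduce everything to an unproven ``local lemma'' asserting that a winding-$w$ coloring of one non-contractible cycle extends across a single layer of $4$-faces to a wider parallel cycle, plus the claim that the two collars grown from $C_1$ and $C_2$ ``can be interpolated freely'' in the middle. Even granting the local step, it only produces \emph{some} winding-$w$ coloring of each intermediate cycle; what the lemma requires is that the \emph{prescribed} coloring $\psi\restriction C_2$ be reachable, i.e., that the set of boundary colorings transmitted through the quadrangulated cylinder is the full set of winding-compliant ones. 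That is exactly the hard content of the result, and it is not automatic: quadrangulated annuli can be colorably rigid even when the winding-number constraint is met (this is precisely what forced extension quadrangulations and the $\pm k/3$ exception in Lemmas~\ref{lemma-joint} and~\ref{lemma-feqcol} demonstrate), and forbidding short non-contractible cycles rules out one family of obstructions, not all of them a priori. Your argument never uses the quantitative hypothesis that the distance between $C_1$ and $C_2$ is at least $|C_1|+|C_2|$ except to assert there is ``enough room,'' which is where the actual work lies.

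Second, there is a circularity risk in the tools you lean on. Lemma~\ref{lemma-feqcol} and Lemma~\ref{lemma-nonfeq} of this paper are proved via Lemma~\ref{lemma-joint}, which is Corollary~4.7 of~\cite{trfree5} and hence downstream of the very Lemma~4.5 you are trying to establish; they cannot serve, even heuristically, as the engine of a proof of Lemma~\ref{lemma-cylcol}. The homomorphism machinery of Lemma~\ref{lemma-hom} is independent and safe to use, but it only yields the special colorings obtained by composing with a cyclic homomorphism to a $k$-cycle (winding number $\pm k/3$, boundary cycles of equal length divisible by $3$); it says nothing about a generic winding-compliant precoloring, nor about the case $|C_1|\neq|C_2|$, which this lemma must also cover. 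A correct proof has to engage directly with how colorings propagate through a long quadrangulated cylinder with no short non-contractible cycles, which is the substance of Section~4 of~\cite{trfree5}.
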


We also need a variation of Lemma~\ref{lemma-cylcol}.
\begin{lemma}\label{lemma-nonfeq}
Let $G$ be a quadrangulation of the cylinder with boundary cycles $C_1$ and $C_2$ of the same length $k$,
such that every non-contractible cycle in $G$ has length at least $k$.  Suppose that $G$ contains
a non-contractible $k$-cycle $K$, and for $i\in \{1,2\}$, let $G_i$ be the subgraph of $G$ drawn
between $K$ and $C_i$.  Assume that neither $G_1$ nor $G_2$ is a forced extension quadrangulation.
If the distance between $C_1$ and $C_2$ is at least $4k$, then a precoloring of $C_1\cup C_2$ extends
to a $3$-coloring of $G$ if and only if it satisfies the winding number constraint.
\end{lemma}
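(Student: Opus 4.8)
The plan is to establish the nontrivial ``if'' direction; the ``only if'' direction is immediate, since $G$ is a quadrangulation of the orientable cylinder, so every $3$-coloring of $G$ satisfies the winding number constraint by Corollary~\ref{cor-win0}, exactly as observed in the introduction. So let $\psi$ be a precoloring of $C_1\cup C_2$ satisfying the winding number constraint. The distance between $C_1$ and $C_2$ is at least $4k$ and every non-contractible cycle of $G$ has length at least $k$, so Lemma~\ref{lemma-joint} applies: $\psi$ extends to a $3$-coloring of $G$ unless $3\mid k$ and $\psi$ has winding number $\pm k/3$ on $C_1$. It therefore suffices to treat this exceptional case, which I would handle by building an explicit coloring of the form $c\circ\Theta$, where $\Theta\colon V(G)\to V(C)$ is a homomorphism to a $k$-cycle $C$ and $c$ is a proper $3$-coloring of $C$; any such composition is automatically a proper $3$-coloring of $G$, and if $\Theta$ is cyclic on $C_1$ and $C_2$ and $c$ has winding number $\pm k/3$ on $C$, then $c\circ\Theta$ has winding number $\pm k/3$ on each boundary cycle. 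The whole point is to arrange that $c\circ\Theta$ agrees with $\psi$ on $C_1\cup C_2$.

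To construct $\Theta$ I would apply Lemma~\ref{lemma-hom} separately to $G_1$ and $G_2$. Each $G_i$ is a quadrangulation of the cylinder (with boundary cycles $C_1,K$, respectively $K,C_2$, all of length $k$), in which every non-contractible cycle has length at least $k$ because this already holds in $G$, and by hypothesis neither is a forced extension quadrangulation. Hence the second part of Lemma~\ref{lemma-hom} yields homomorphisms $\alpha_0,\alpha_1\colon V(G_1)\to V(C)$ agreeing on $K$ and offset by $2$ on $C_1$, and homomorphisms $\beta_0,\beta_1\colon V(G_2)\to V(C)$ agreeing on $K$ and offset by $2$ on $C_2$. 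After post-composing one of the two pairs with a suitable automorphism of $C$ (which preserves being a homomorphism, cyclicity on the boundary cycles, and the offset-by-$2$ relation), I may assume $\alpha_0\restriction V(K)=\beta_0\restriction V(K)$, so that $\Theta_{ij}:=\alpha_i\cup\beta_j$ is a well-defined homomorphism $V(G)\to V(C)$, cyclic on both $C_1$ and $C_2$, for each $i,j\in\{0,1\}$.

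Now fix a proper $3$-coloring $c$ of $C=x_1x_2\ldots x_k$ whose winding number sign is chosen so that $c\circ\Theta_{00}$ has the same winding number as $\psi$ on $C_1$, and let $c_a$ denote the global shift of $c$ by $a\in\{0,1,2\}$. The restriction of $c_a\circ\Theta_{ij}$ to $C_1$ depends only on $i$, and on $C_2$ only on $j$. Since $c$ changes by a fixed nonzero amount over two steps of $C$, switching $\alpha_0$ to $\alpha_1$ shifts the coloring on $C_1$ by a fixed nonzero $\epsilon_1\in\mathbb{Z}/3$, and switching $\beta_0$ to $\beta_1$ shifts the coloring on $C_2$ by a fixed nonzero $\epsilon_2\in\mathbb{Z}/3$. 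Because $c_a\circ\Theta_{ij}$ is a genuine $3$-coloring of $G$, it satisfies the winding number constraint, so once its winding number matches $\psi$ on $C_1$ it automatically matches on $C_2$ as well; thus the agreement conditions reduce to $a+\epsilon_1 i\equiv t_1$ and $a+\epsilon_2 j\equiv t_2\pmod 3$ for suitable targets $t_1,t_2$. The key elementary fact is that $\{\epsilon_1 i-\epsilon_2 j:i,j\in\{0,1\}\}=\mathbb{Z}/3$ for any nonzero $\epsilon_1,\epsilon_2$, so some $(i,j)$ realizes $t_1-t_2$, after which $a$ is determined; the resulting $c_a\circ\Theta_{ij}$ extends $\psi$.

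The main obstacle is precisely the exceptional winding number $\pm k/3$ case: a single homomorphism $\Theta$ may match $\psi$ on one boundary cycle but be misaligned on the other, and a single offset direction only covers two of the three possible misalignments. The crux of the argument is that two \emph{independent} offset-by-$2$ homomorphisms — available exactly because neither $G_1$ nor $G_2$ is a forced extension quadrangulation — provide two independent degrees of freedom on $C_1$ and on $C_2$, and the displayed arithmetic shows these suffice to eliminate any misalignment. The remaining care concerns aligning the homomorphisms on the middle cycle $K$ (handled by an automorphism of $C$) and checking that $G_1,G_2$ genuinely meet the hypotheses of Lemma~\ref{lemma-hom}, both of which are routine.
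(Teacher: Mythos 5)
Your proposal is correct and follows essentially the same route as the paper: handle the non-extremal winding numbers via Lemma~\ref{lemma-joint}, then in the $\pm k/3$ case apply Lemma~\ref{lemma-hom} to $G_1$ and $G_2$ to obtain offset-by-$2$ homomorphism pairs, glue them along $K$, and observe that the resulting colorings realize all three colorings of $C_2$ of the prescribed winding number while agreeing with $\psi$ on $C_1$. The only (immaterial) difference is bookkeeping: the paper normalizes the coloring on $C_1$ by pulling back $\psi$ and uses just the three combinations $\theta_1\cup\theta_2$, $\theta'_1\cup\theta_2$, $\theta_1\cup\theta'_2$, whereas you use all four combinations together with global color shifts and a short $\mathbb{Z}/3$ computation.
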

\begin{proof}
As we have shown in the introduction (Corollary~\ref{cor-win0}), the winding number constraint is a necessary condition for the precoloring to extend.
Let $\psi$ be a precoloring of $C_1\cup C_2$ that satisfies the winding number constraint.
If $\psi$ does not have winding number $\pm k/3$ on the boundary cycles, then the claim follows from Lemma~\ref{lemma-joint}.
Hence, assume that $k$ is divisible by $3$ and $\psi$ has winding number $\pm k/3$ on the boundary cycles.

Let $C$ be a $k$-cycle, $C=x_1x_2\ldots x_k$.
Let $\theta_1$ and $\theta'_1$ be the homomorphisms from $G_1$ to $C$ obtained by Lemma~\ref{lemma-hom}.
Let $C_1=v_1\ldots v_k$ and $K=w_1\ldots w_k$, with the labels chosen so that
$\theta_1(w_i)=\theta'_1(w_i)=\theta_1(v_i)=x_i$ and $\theta'_1(v_i)=x_{i+2}$ for $1\le i\le k$, where $x_{k+1}=x_1$ and $x_{k+2}=x_2$.
Let $\theta_2$ and $\theta'_2$ be the homomorphisms from $G_2$ to $C$ obtained by Lemma~\ref{lemma-hom}.
We can choose the homomorphisms and the labels of $C_2=z_1z_2\ldots z_k$ so that
$\theta_2(w_i)=\theta'_2(w_i)=\theta_2(z_i)=x_i$ and $\theta'_2(z_i)=x_{i+2}$ for $1\le i\le k$.

Let $\theta^\star_1=\theta_1\cup\theta_2$, $\theta^\star_2=\theta'_1\cup \theta_2$ and $\theta^\star_3=\theta_1\cup\theta'_2$.
For $i\in\{1,2,3\}$, let $\varphi_i$ be the $3$-coloring of $G$ defined by setting $\varphi_i(y)=\psi(v)$ for every $y\in V(G)$ and
$v\in V(C_1)$ such that $\theta^\star_i(y)=\theta^\star_i(v)$.  Observe that 
$\varphi_1\restriction V(C_1)=\varphi_2\restriction V(C_1)=\varphi_3\restriction V(C_1)=\psi\restriction V(C_1)$ and
that $\varphi_1\restriction V(C_2)\neq\varphi_2\restriction V(C_2)\neq \varphi_3\restriction V(C_2)\neq \varphi_1\restriction V(C_2)$.

Since $\psi$ has winding number $\pm k/3$, there are only three colorings of $C_2$ with the same winding number as $\psi$ has on $C_1$.
Consequently, one of $\varphi_1$, $\varphi_2$ and $\varphi_3$ extends $\psi$.
\end{proof}

We can now prove Theorem~\ref{thm-structmain} for quadrangulations of the cylinder, which exposes
the importance of the forced extension quadrangulations.

\begin{lemma}\label{lemma-feq}
For every $k\ge 3$, there exists a constant $\beta_2$ with the following property.  Let $G$ be a quandrangulation of the cylinder, such that
both boundary cycles of $G$ have length at most $k$.  Then $G$ has a subgraph $H$ with at most $\beta_2$
vertices, such that $H$ contains all the boundary cycles and each face $h$ of $H$ satisfies one of the following (where $G[h]$ is the subgraph
of $G$ drawn in the closure of $h$):
\begin{enumerate}[(a)]
\item every precoloring of the boundary of $h$ which satisfies the winding number constraint extends to a $3$-coloring of $G[h]$, or
\item $h$ is an open cylinder and $G[h]$ is its wide forced extension quadrangulation with boundary cycles whose length is divisible by $3$.
\end{enumerate}
\end{lemma}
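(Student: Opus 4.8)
The plan is to cut $G$ by a bounded number of short non-contractible cycles into pieces of just two kinds: pieces in which both boundary cycles have the same length $\ell$ and every non-contractible cycle has length at least $\ell$ (to which the coloring lemmas of Section~\ref{sec-feq} apply directly), and pieces that are narrow and can be absorbed into $H$ at bounded cost. Call a subcylinder of $G$ \emph{wide} if the distance between its two boundaries is at least $4k$, and \emph{narrow} otherwise. Since $k$ is fixed, $\beta_2$ may be any constant depending on $k$, so only finiteness matters. A narrow subcylinder $R$ need not be bounded (a single high-degree vertex spans a large quadrangulated disk while keeping both boundaries close), but it can be \emph{summarized}: cutting $R$ along a shortest path $P$ between its boundaries, of length less than $4k$, unfolds $R$ into a quadrangulation of the disk whose boundary has length at most $2k+2|P|\le 10k$. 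Applying Corollary~\ref{cor-disk} and folding back, $R$ contributes a bounded subgraph to $H$ all of whose faces are of type~(a). Thus every narrow piece produced below is handled.

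To produce the cuts, let $g$ be the length of a shortest non-contractible cycle; then $g\le k$ and $g\equiv|C_1|\equiv|C_2|\pmod 2$. Let $K_1$ (resp.\ $K_2$) be a shortest non-contractible cycle closest to $C_1$ (resp.\ $C_2$), i.e.\ minimizing the subcylinder it cuts off with that boundary; as non-contractible cycles in a cylinder are nested these are well defined, and the open subcylinder between $C_1$ and $K_1$ contains no non-contractible cycle of length $g$, hence only cycles of length at least $g+2$. Cutting at $K_1$ and $K_2$ splits $G$ into a \emph{core} $[K_1,K_2]$, both of whose boundaries have length $g$ and every non-contractible cycle of which has length at least $g$, together with two \emph{funnels} $[C_1,K_1]$ and $[K_2,C_2]$, each with one boundary of length $g$, the other of length $\le k$, and open interior of shortest-cycle length at least $g+2$. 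The core is handled below. Each funnel is processed recursively by the same device one level up: inside a funnel whose open interior has shortest-cycle length $g'>g$, cut at the $g'$-cycles closest to each of its two boundaries, producing a new core at level $g'$ and two funnels whose open interiors have shortest-cycle length at least $g'+2$. Since all these lengths lie in $\{g,g+2,\dots,k\}$, the recursion has depth at most $k/2$; each node produces boundedly many children and boundedly many cutting cycles of length $\le k$, so the whole recursion tree, hence the set of all cutting cycles, has size bounded in terms of $k$. Any funnel or core that is narrow is a leaf and is summarized as above.

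It remains to treat a wide core $R$ with boundaries $B_1,B_2$ of equal length $\ell\le k$, every non-contractible cycle of length at least $\ell$, and distance at least $4k\ge 4\ell$. If $\ell$ is not divisible by $3$, Lemma~\ref{lemma-joint} shows every winding-number-compliant precoloring extends, so $R$ is one face of type~(a). If $\ell$ is divisible by $3$ and $R$ is a forced extension quadrangulation, then $R$ is a wide forced extension quadrangulation with boundary length divisible by $3$, a single face of type~(b). If $\ell$ is divisible by $3$ but $R$ is not a forced extension quadrangulation, peel a maximal forced extension quadrangulation off each end: let $K$ be an $\ell$-cycle furthest from $B_1$ with $[B_1,K]$ a forced extension quadrangulation, and define $K'$ symmetrically. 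By maximality and the assumption on $R$, the middle piece $[K,K']$ is not a forced extension quadrangulation, and any $\ell$-cycle in its interior splits it into two pieces, neither a forced extension quadrangulation; hence, when $[K,K']$ is wide, Lemma~\ref{lemma-nonfeq} (if such an interior $\ell$-cycle exists) or Lemma~\ref{lemma-cylcol} (if all interior cycles exceed $\ell$) makes it type~(a), while a narrow $[K,K']$ is summarized. The peeled ends $[B_1,K]$, $[K',B_2]$ are forced extension quadrangulations handled by the previous two cases (type~(b), or type~(a) via Lemma~\ref{lemma-feqcol}) when wide and summarized when narrow. In every case $R$ contributes $O(1)$ cutting cycles of length $\le k$ and faces of type~(a) or~(b).

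The main obstacle is precisely the funnels, whose two boundary cycles have \emph{different} lengths: no lemma of Section~\ref{sec-feq} applies to such a region, so the heart of the argument is showing that a funnel peels, level by level, into equal-length cores plus narrow remainders, with the number of levels—and therefore of cuts—controlled by the at most $k/2$ possible non-contractible cycle lengths. A secondary point needing care is that narrowness does not imply bounded size, which forces the narrow pieces to be summarized through Corollary~\ref{cor-disk} after unfolding rather than placed into $H$ wholesale; checking that the unfolded boundary stays of length $O(k)$ and that folding back preserves the coloring conclusions is routine but must be verified, as must the well-definedness and nesting of the ``closest shortest cycle'' cuts used throughout.
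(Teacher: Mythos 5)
Your overall strategy is the paper's own: cut along inclusion-minimal short non-contractible cycles, summarize narrow pieces by cutting along a short path and invoking Corollary~\ref{cor-disk}, peel maximal forced extension quadrangulations off the ends of an equal-length core so that the middle falls under Lemma~\ref{lemma-nonfeq} or Lemma~\ref{lemma-cylcol}, and classify wide forced extension quadrangulations via Lemma~\ref{lemma-feqcol} and Lemma~\ref{lemma-joint}. (The paper organizes the cutting as a double induction, on $k$ and then on decreasing $\ell(G)$, producing at most five cuts per step, rather than your explicit bounded-depth recursion tree; this is cosmetic.) Your treatment of the wide cores is correct, including the observation that maximality of the peeled ends forces every interior $\ell$-cycle of the middle piece to split it into two non-forced-extension pieces.

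There is, however, one concrete gap: the base case of your funnel recursion. You assert that ``a funnel peels, level by level, into equal-length cores plus narrow remainders,'' and that no coloring lemma applies to a region whose two boundary cycles have different lengths. Neither is true. A funnel whose open interior contains no non-contractible cycle of length at most $k$ (so that $g'>k$ and your recursion cannot cut further) can perfectly well be wide --- its two boundaries can have different lengths $a<b\le k$ while every other non-contractible cycle is arbitrarily long and the distance between the boundaries is arbitrarily large. As written, your recursion neither cuts such a piece nor summarizes it, so the argument does not terminate on it. The fix is exactly Lemma~\ref{lemma-cylcol}, which does \emph{not} require equal boundary lengths: a terminal wide funnel has all non-contractible cycles other than its boundaries of length greater than $\max(a,b)$ and distance at least $4k\ge a+b$, hence is a single face of type~(a); this is the paper's case ``$\ell(G_i)=k+1$.'' A secondary, minor point: your appeal to non-contractible cycles being ``nested'' is false (they may cross), so the ``closest'' shortest cycles should be defined as ones making the cut-off region inclusion-minimal (as the paper does), with a standard uncrossing argument if one wants the chosen cycles on opposite ends to be disjoint or ordered.
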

\begin{proof}
Let $C_1$ and $C_2$ be the boundary cycles of $G$, where $|C_1|\le |C_2|=k$.
If $K$ is a shortest non-contractible cycle of $G$ distinct from $C_1$ and $C_2$, let $\ell(G)=\min(|K|,k+1)$.
We now proceed by induction on $k$, and subject to that on decreasing $\ell(G)$;
i.e., we assume that Lemma~\ref{lemma-feq} holds for all quadrangulations $G'$ with boundary cycles of length less than $k$,
and for those with boundary cycles of length at most $k$ and with $\ell(G')>\ell(G)$.

We will exhibit a sequence $K_1$, \ldots, $K_n$ of non-contractible $(\le\!k)$-cycles with $n\le 5$ such that
$C_1=K_1$, $C_2=K_n$, for $1\le i < j <m\le n$, the cycle $K_j$ separates $K_i$ from $K_m$, and
such that for $1\le i\le n-1$, the subgraph $G_i$ of $G$ drawn between $K_i$ and $K_{i+1}$ satisfies one of
the following properties:
\begin{enumerate}[(i)]
\item $\max(|K_i|, |K_{i+1}|)<k$, or
\item $\ell(G_i)>\ell(G)$, or
\item $\ell(G_i)=k+1$, or
\item $|K_i|=|K_{i+1}|=\ell(G_i)=k$ and $G_i$ contains a non-contractible $k$-cycle $K'_i$ such that neither
the subgraph of $G$ drawn between $K_i$ and $K'_i$ nor the subgraph drawn between $K'_i$ and $K_{i+1}$ is
a forced extension quadrangulation, or
\item $G_i$ is a forced extension quadrangulation.
\end{enumerate}

Before we construct such a sequence, we show how its existence implies Lemma~\ref{lemma-feq}.
If the distance between $K_i$ and $K_{i+1}$ is less than $4k$, then let $G'_i$ be the graph obtained from $G_i$
by cutting the cylinder along a shortest path $P_i$ between $K_i$ and $K_{i+1}$.  Note that $G'_i$ is embedded in the disk;
let $H'_i$ be the subgraph of $G'_i$ obtained by Corollary~\ref{cor-disk}, and let $H_i\subseteq G_i$ be obtained from $H'_i$
by merging back the vertices of the path $P_i$.

Otherwise, in cases (i) and (ii), let $H_i$ be the subgraph of $G_i$ obtained by the induction hypothesis.
In the cases (iii) and (iv), note that by Lemmas~\ref{lemma-cylcol} and \ref{lemma-nonfeq}, every  precoloring of $K_i\cup K_{i+1}$ which satisfies the winding number
constraint extends to a $3$-coloring of $G_i$, and set $H_i=K_i\cup K_{i+1}$.  In the case (v), since $G_i$ is a wide forced extension quadrangulation,
Lemma~\ref{lemma-feqcol} implies that either every precoloring of $K_i\cup K_{i+1}$ which satisfies the winding number
constraint extends to a $3$-coloring of $G_i$, or $|K_i|$ is divisible by $3$; we set $H_i=K_i\cup K_{i+1}$.

Observe that every face of $H=\bigcup_{i=1}^{n-1} H_i$ satisfies either the condition (a) or (b) of the statement of the lemma.
Hence, it suffices to construct the sequence $K_1$, \ldots, $K_n$ with the described properties.

If $\ell(G)=k+1$, then we set $n=2$, $K_1=C_1$ and $K_2=C_2$, and note that $G$ satisfies (iii).
If $\ell(G)<k$, then let $n=4$, let $K_1=C_1$, $K_4=C_2$
and let $K_2$ and $K_3$ be non-contractible $\ell(G)$-cycles chosen so that $G_1$ and $G_3$ are minimal (with possibly $K_2=K_3$).
Consequently, $G_1$ and $G_3$ satisfy (ii) and $G_2$ satisfies (i).

Hence, assume that $\ell(G)=k$.  We set $K_1=C_1$ and let $K_2$ be a non-contractible $k$-cycle chosen so that $G_1$ is minimal
(where possibly $K_2=C_1$ if $|C_1|=k$), so that
$G_1$ satisfies (ii).  If the subgraph of $G$ drawn between $K_2$ and $C_2$ satisfies (iii), (iv) or (v), then we set $n=3$ and $K_3=C_2$.
Otherwise, let $n=5$, $K_5=C_2$, and let $K_3$ and $K_4$ be chosen as non-contractible $k$-cycles such that $G_2$ and $G_4$
are forced extension quadrangulations and they are maximal with this property.  Consequently, every non-contractible
$k$-cycle in $G_3$ distinct from $K_3$ and $K_4$ is disjoint from $K_3\cup K_4$, and since the subgraph of $G$ drawn between $K_2$ and $C_2$
does not satisfy (iv), we conclude that $K_3$ and $K_4$ are the only non-contractible $k$-cycles in $G_3$, and thus $G_3$ satisfies (iii).
\end{proof}

We can now prove Theorem~\ref{thm-structmain} in the special case of graphs embedded in the disk.

\begin{theorem}\label{thm-structdisk}
For every integer $k\ge 4$, there exists a constant $\beta_3$ with the following property.  Let $G$ be a triangle-free graph embedded in
the disk whose cuff traces a cycle $C$ in $G$ of length at most $k$.
Suppose that every $4$-cycle in $G$ bounds a face.  Then $G$ has a subgraph $H$ with at most $\beta_3$
vertices, such that $C\subseteq H$ and each face $h$ of $H$ satisfies one of the following (where $G[h]$ is the subgraph
of $G$ drawn in the closure of $h$).
\begin{enumerate}[(a)]
\item Every precoloring of the boundary of $h$ extends to a $3$-coloring of $G[h]$, or
\item $G[h]$ is a quadrangulation and every precoloring of the boundary of $h$ which satisfies the winding number constraint
extends to a $3$-coloring of $G[h]$, or
\item $h$ is an open cylinder and $G[h]$ is its wide forced extension quadrangulation with boundary cycles whose length is divisible by $3$.
\end{enumerate}
\end{theorem}
\begin{proof}
Let $H_0$ be the subgraph of $G$ obtained by Theorem~\ref{thm-struct}.
If a face $h$ of $H_0$ satisfies (a) or (b), then let $F_h$ be the subgraph of $G$
consisting of the boundary walks of $h$.  If a face $h$ of $H_0$ satisfies (c), then
let $F_h$ be the subgraph of $G[h]$ obtained by Lemma~\ref{lemma-feq}.  Note that no face of $H_0$
satisfies (d), since $G$ does not contain non-facial $4$-cycles.
Observe that every face of the graph $H=\bigcup_h F_h$ satisfies one of the conditions (a), (b), or (c) of Theorem~\ref{thm-structdisk}.
\end{proof}

\section{The structure theorem}\label{sec-main}

In order to prove the structure theorem, we use a result that we derived in the second part of the series~\cite{cylgen-part2}.
We need a few more definitions.

Let $G$ be a graph embedded in the cylinder, with the boundary cycles $C_i=x_iy_iz_iw_i$
of length $4$, for $i=1,2$.  Let $y'_i$ be either a new vertex or $y_i$, and let $w'_i$ be either a new vertex
or $w_i$.  Let $G'$ be obtained from $G$ by adding $4$-cycles $x_iy'_iz_iw'_i$ forming the boundary cycles.  We say that
$G'$ is obtained by \emph{framing on pairs $x_1z_1$ and $x_2z_2$}.

Let $G$ be a graph embedded in the cylinder with boundary cycles $C_1$ and $C_2$ of length $3$, such that every face of $G$ has length $4$.
We say that such a graph $G$ is a \emph{$3,3$-quadrangulation}.
Let $G'$ be obtained from $G$ by subdividing at most one edge in each of $C_1$ and $C_2$.  We say that such a graph $G'$ is a \emph{near $3,3$-quadrangulation}.

If $G$ is a graph embedded in a surface and $C$ is the union of the boundary cycles of $G$, we say that $G$ is \emph{critical} if
$G\neq C$ and for every $G'\subsetneq G$ such that $C\subseteq G'$, there exists a $3$-coloring of $C$ that extends to a $3$-coloring
of $G'$, but does not extend to a $3$-coloring of $G$.

\begin{theorem}[Dvo\v{r}\'ak and Lidick\'y~\cite{cylgen-part2}]\label{thm-mfar}
There exists a constant $D\ge 0$ such that the following holds.
Let $G$ be a triangle-free graph embedded in the cylinder with boundary cycles $C_1$ and $C_2$ of length $4$.
If the distance between $C_1$ and $C_2$ is at least $D$ and $G$ is critical, then either
\begin{itemize}
\item $G$ is obtained from a patched Thomas-Walls graph by framing on its interface pairs, or
\item $G$ is a near $3,3$-quadrangulation.
\end{itemize}
\end{theorem}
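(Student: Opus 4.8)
The plan is to show that criticality forces $G$ to be essentially one-dimensional — a long thin tube that is locally a quadrangulation away from a bounded set of defects — and then to split into the two advertised outcomes according to whether $G$ is globally a quadrangulation or carries a single ``odd'' Thomas--Walls twist. Throughout I take $D$ large enough that the results requiring distance at least $4k$ with $k=4$ (notably Lemma~\ref{lemma-joint}) apply in the middle of the tube, and that the two boundary cuffs cannot interact.

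First I would record the standard consequences of criticality for a triangle-free graph. Every vertex off $C_1\cup C_2$ has degree at least $3$, since a vertex of degree at most $2$ lies on two non-triangular faces and can always be recolored, hence is never needed to obstruct a precoloring. Moreover, by Aksenov's theorem~\cite{aksenov} every contractible $(\le\!5)$-cycle bounds a face: otherwise it bounds a disk with nonempty interior, and since any $3$-coloring of a $(\le\!5)$-cycle extends inside a triangle-free planar graph, deleting that interior would not enlarge the set of extendable precolorings, contradicting criticality. In particular all faces have length at least $4$. I would then run a discharging argument on the annulus, assigning each vertex $v$ the charge $\deg(v)-4$ and each face $f$ the charge $\ell(f)-4$; since $\chi=0$ the total charge equals $-(|C_1|+|C_2|)=-8$, a fixed constant. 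Using the known forbidden configurations of critical triangle-free graphs, together with the disk structure of Corollary~\ref{cor-disk} applied to the region cut open along a shortest $C_1$--$C_2$ path, I would confine all faces of length greater than $4$ to bounded neighbourhoods of the two cuffs and to at most one bounded interior defect, so that the remainder of the tube is a quadrangulation.

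The heart of the argument is the sweep across this quadrangulated middle. Because the tube is long, I can produce a sequence of non-contractible $4$-cycles $K_1,\dots,K_n$ running from near $C_1$ to near $C_2$ with consecutive cycles sharing a vertex, exactly as in the forced-extension setup underlying Lemma~\ref{lemma-hom}; the homomorphism to a $4$-cycle supplied by that lemma, combined with Lemmas~\ref{lemma-joint},~\ref{lemma-cylcol} and~\ref{lemma-nonfeq}, controls which precolorings extend across the middle. The dichotomy now surfaces as a parity phenomenon. If the quadrangulated middle admits the homomorphism of Lemma~\ref{lemma-hom} with a single consistent type throughout, then feeding this back through the boundary analysis shows that $G$ is a quadrangulation whose two $4$-cycle cuffs arise from subdivided boundary triangles, i.e.\ a near $3,3$-quadrangulation. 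Otherwise an odd twist is present; criticality then forces the middle to realize it by a single reduced Thomas--Walls ladder, each degree-$3$ apex of which has been expanded into a patch because $G$ is triangle-free, and the two cuffs are obtained by framing on the interface pairs $u_1u_3$ and $v_1v_3$, with the framing vertices new or identified according to whether criticality retains or deletes them.

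I expect the main obstacle to be the rigidity step: proving that the locally forced structure propagates \emph{consistently} along the whole tube, so that no mixed graph — part quadrangulation and part ladder, or carrying two independent twists or two separated defects — can be critical. This amounts to a family of exchange arguments showing that any second defect, or any place where the parity type changes, yields a proper subgraph obstructing exactly the same precolorings as $G$, contradicting minimality; carrying these out uniformly across all the transition regions, and verifying that the bounded boundary gadgets are precisely the framings and subdivisions claimed, is the delicate part of the proof.
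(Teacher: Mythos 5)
This theorem is not proved in the present paper at all: it is imported verbatim from the second paper of the series~\cite{cylgen-part2}, where its proof occupies essentially that entire paper. So there is no short argument here for your sketch to be measured against, and your proposal does not come close to supplying one. Two concrete problems. First, your discharging step does not do what you claim. The total charge $\sum_v(\deg(v)-4)+\sum_f(\ell(f)-4)=-8$ is correct, but degree-$3$ vertices carry charge $-1$, so this identity places no bound whatsoever on the number of faces of length $\ge 5$, let alone confine them to bounded neighbourhoods of the cuffs plus one interior defect; that would require genuine discharging rules and a list of reducible configurations, which you only gesture at (``the known forbidden configurations''). Second, and more fundamentally, your central device --- sweeping the quadrangulated middle with non-contractible $4$-cycles and invoking Lemma~\ref{lemma-hom} and the forced-extension machinery with $k=4$ --- is incompatible with both of the advertised outcomes. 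In a near $3,3$-quadrangulation every interior non-contractible cycle is homologous to a boundary triangle and hence odd, so (by triangle-freeness) of length at least $5$: there are no non-contractible $4$-cycles in the middle to sweep with. In a patched Thomas--Walls graph the cylinder is tiled by $4$- and $5$-faces, so the middle is not a quadrangulation either. The machinery of Section~\ref{sec-feq} is used in this paper for case (c) of Theorem~\ref{thm-structmain} (long quadrangulated cylinders with equal boundary lengths), not for the length-$4$ cylinder dichotomy of Theorem~\ref{thm-mfar}.

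Finally, the step you yourself flag as ``the delicate part'' --- showing that the local structure propagates consistently, that no mixed or doubly-defective graph is critical, and that the boundary gadgets are exactly framings and single subdivisions --- is not a technical afterthought; it is the entire content of the theorem. As written, the proposal identifies the two target families and asserts a plausible parity dichotomy between them, but proves neither that the list is exhaustive nor that either family actually arises in the claimed form. The honest conclusion is that this statement must be cited from~\cite{cylgen-part2}, as the paper does, rather than derived from the lemmas available here.
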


Finally, we are ready to combine all the results.

\begin{proof}[Proof of Theorem~\ref{thm-structmain}]
Let $D$ be the constant of Theorem~\ref{thm-mfar}.
Let $H_0$ be the subgraph of $G$ obtained by Theorem~\ref{thm-struct}.  If a face $h$ of $H_0$ satisfies (a) or (b), then let $F_h$ be the subgraph of $G$
consisting of the boundary walks of $h$.  If a face $h$ of $H_0$ satisfies (c), then let $F_h$ be the subgraph of $G[h]$ obtained by Lemma~\ref{lemma-feq}.

Suppose that $h$ is a face of $H_0$ satisfying (d).  Let $C_1$ and $C_2$ be the boundary cycles of $G[h]$.
If the distance between $C_1$ and $C_2$ is less than $D$, then cut the cylinder in that $G[h]$ is embedded along
the shortest path $P$ between $C_1$ and $C_2$, let $F'_h$ be the graph obtained by applying Theorem~\ref{thm-structdisk}
to the resulting graph embedded in the disk, and let $F_h$ be obtained from $F'_h$ by gluing back the two paths
created by cutting. 

Assume now that the distance between $C_1$ and $C_2$ in $G[h]$ is at least $D$.  If every $3$-coloring of $C_1\cup C_2$ extends to a
$3$-coloring of $G[h]$, then let $F_h=C_1\cup C_2$.  Otherwise, let $F'$ be a maximal critical subgraph of $G[h]$,
and apply Theorem~\ref{thm-mfar} to $F'$.
\begin{itemize}
\item If $F'$ is obtained from a patched Thomas-Walls graph by framing on its interface pairs,
then note that all faces of $G'$ have length at most $5$, and thus by the assumptions of Theorem~\ref{thm-structmain},
we have $F'=G[h]$.  Let $F_h$ consist of the union of the $4$-cycles containing an interface pair of $F'$ (hence, $|V(F_h)|\le 12$).
\item If $F'$ is a near $3,3$-quadrangulation, then similarly to the previous case, we have $F'=G[h]$.
Let $C'_1$ and $C'_2$ be the $5$-cycles in $F'$ such that for $i=1,2$, $C_i\cap C'_i$ is a path of length two
and the symmetric difference of $C_i$ and $C'_i$ bounds a $5$-face.  Let $F_h$ consist of $C_1\cup C'_1\cup C_2\cup C'_2$
and the subgraph of the quadrangulation between $C'_1$ and $C'_2$ obtained by Lemma~\ref{lemma-feq}.
\end{itemize}

Observe that every face of the graph $H=\bigcup_h F_h$ satisfies one of the conditions (a), (b), (c) or (d) of Theorem~\ref{thm-structmain}.
\end{proof}

\bibliographystyle{acm}
\bibliography{cylgen}

\end{document}